\newcommand{\ul}{\underline}
\newcommand{\e}{\mathrm{e}}
\newcommand{\erf}{\mathop{\mathrm{erf}}\nolimits}
\newtheorem{theorem}{Theorem}[section]
\newtheorem{corollary}[theorem]{Corollary}
\newtheorem{lemma}[theorem]{Lemma}
\theoremstyle{definition}
\newtheorem{definition}[theorem]{Definition}
\newtheorem{example}[theorem]{Example}
\theoremstyle{remark}
\newtheorem*{term}{Terms in this paper}
\newtheorem*{remark}{Remark}
\newtheorem*{acknowledgment}{Acknowledgments}
\title{Difference-differential fields of continuous functions 
}
\author{Seiji NISHIOKA\footnote{
  Faculty of Science, Yamagata University, 1-4-12 Kojirakawa-machi,  Yamagata-shi, Yamagata, 990-8560, Japan. e-mail: nishioka@sci.kj.yamagata-u.ac.jp
  }
}
\date{\today}
\begin{document}

\maketitle

\begin{abstract}
  \noindent
  The set $\mathcal{C}$ of complex-valued continuous functions on $[0,\infty)$
  is a ring by the addition and the convolution.  
  It has the quotient field $Q(\mathcal{C})$, 
  by which J. Mikusi\'nski developed his operational calculus.  
  In this paper, 
  we revisit a derivation and a transforming operator for $Q(\mathcal{C})$
  written in his textbook,
  and define another transforming operator  
  related to the $q$-shift operator, which
  gives structures of a $q$-difference field
  and a difference field of Mahler type to $Q(\mathcal{C})$.  
  Appropriate derivatives are also considered.  
  
  \bigskip\noindent
  MSC2020:
  Primary 
  12H10; 
  Secondary
  12H05, 
  39A05. 

  \bigskip\noindent
  Keywords: 
  difference fields, 
  differential fields, 
  operational calculus,
  the field of continuous functions. 
  
\end{abstract}

\section{Introduction}
Besides the field of meromorphic functions and the field of formal power series, 
we find another example of differential/difference field in J. Mikusi\'nski's 
study \cite{Mikusinski1983}.  
His operational calculus is founded on the commutative ring $\mathcal{C}$ of
complex-valued continuous functions on $[0,\infty)$
by the addition and the convolution, 
where the convolution of $f(t)$ and $g(t)$ is 
\[
  \int_0^tf(t-\tau)g(\tau)d\tau.
\]
The ring $\mathcal{C}$ has its quotient field $Q(\mathcal{C})$ by Titchmarsh's theorem.  
In his textbook \cite{Mikusinski1983}, a continuous function $f\in\mathcal{C}$ is denoted by the symbol $\{f(t)\}$. 
The identity $1$ is not the constant function $l=\{1\}$ but $\{1\}/\{1\}$.  
Any complex number $\alpha\in\mathbb{C}$ is also regarded as $\{\alpha\}/\{1\}$.  
The function $l$ is called the integral operator, 
for its convolution with an arbitrary function $f=\{f(t)\}$ causes
\[
  lf=\{1\}\{f(t)\}=\left\{\int_0^tf(\tau)d\tau\right\}.
\]
Its inverse $1/l$ in the field $Q(\mathcal{C})$ is denoted by the letter $s$ and called the differential operator.  
It is known that the exponential function $\e^{\alpha t}$ 
and Bessel function $J_0(\alpha t)$ are
expressed as 
\[
  \{\e^{\alpha t}\}=\frac{1}{s-\alpha},\quad
  \{J_0(\alpha t)\}=\frac{1}{\sqrt{s^2+\alpha^2}}.
\]  
In addition to the above two arithmetical operators, the field $Q(\mathcal{C})$
has other algebraic operations.  
After Mikusi\'nski, the derivation $d/ds$ is defined by 
\[
  \frac{d}{ds}\{a(t)\}=\{-ta(t)\}.
\]
As the symbol suggests, the derivative of $s$ is $1$.  
He also defined the transforming operator $T^\alpha$ by
\[
  T^\alpha\{a(t)\}=\{\e^{\alpha t}a(t)\}
\]
for an arbitrary complex number $\alpha$.  
It is an automorphism of $Q(\mathcal{C})$ and has the following formula, 
\[
  T^\alpha s=s-\alpha,
\]
which provides a usual structure of a difference field for $\mathbb{C}(s)$.  

All of the above is found in Mikusi\'nski's textbook \cite{Mikusinski1983}.  
In this paper, we define another transforming operator $\tau_q$ by
\[
  \tau_q\{a(t)\}=\{qa(qt)\}.
\]
Since it satisfies $\tau_ql=ql$, $\mathbb{C}(l)$ is a $q$-difference field with it.  
Moreover, a difference field of Mahler type, which is related to transcendental number theory (cf. Ku. Nishioka's book \cite{Kumi:book}), appears when $q$ is a unit fraction 
(see Theorem \ref{p16a}).  

Once we find $Q(C)$ is a differential/difference field, 
purely algebraic results in differential/difference algebra are applicable to it.  
For example, setting $L=Q(C)$, 
we can use the following theorem to prove algebraic independence of 
solutions of first-order rational difference equations.  

\begin{theorem}[S. Nishioka \cite{S2013}, Theorem 1]\label{th_deg}
  Let $\mathcal{K}$ be a difference field of characteristic zero and 
  $\mathcal{L}=(L,\tau)$ a difference overfield of $\mathcal{K}$.  
  Let $c_1,\dots,c_n\in\mathbb{Z}_{>0}$ $(n\geq 1)$ with $c_i\neq c_j$ $(i\neq j)$
  and let 
  \begin{equation*}
    f_{11},\dots,f_{1m_1},\ 
    f_{21},\dots,f_{2m_2},\ 
    \dots,\ 
    f_{n1},\dots,f_{nm_n}\in L \quad (m_i\geq 1)
  \end{equation*}
  satisfy $$\tau(f_{ij})B_{ij}(f_{ij})=A_{ij}(f_{ij}),$$ where
  $A_{ij},B_{ij}\in K[X]\setminus\{0\}$ are relatively prime
  and satisfy $$\max\{\deg A_{ij},\deg B_{ij}\}=c_i.$$
  If $f_{i1},\dots,f_{im_i}$ are 
  algebraically independent over $K$ for each $1\leq i\leq n$,
  then the elements $f_{ij}$, $1\leq i\leq n$, $1\leq j\leq m_i$,
  are algebraically independent over $K$.
\end{theorem}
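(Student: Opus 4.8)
\emph{Proof proposal.} The plan is to attach to the situation a single numerical invariant coming from the difference structure and to evaluate it in two complementary ways. Write $E=K(f_{11},\dots,f_{nm_n})$ for the field generated over $K$ by all the solutions. Because $\tau(f_{ij})=A_{ij}(f_{ij})/B_{ij}(f_{ij})\in K(f_{ij})$, the automorphism $\tau$ of $L$ maps $E$ into itself, and every $f_{ij}$ is algebraic of degree at most $c_i$ over $\tau(E)$, since it satisfies $A_{ij}(X)-\tau(f_{ij})B_{ij}(X)=0$ with coefficients in $\tau(E)$. As $E/K$ is finitely generated, $E/\tau(E)$ is a finite extension, so the index $\delta:=[E:\tau(E)]$ is a well-defined positive integer intrinsically attached to $(E,\tau|_E)$, independent of any choice of generators or basis.

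First I would prove an index formula. Choose a transcendence basis $\mathcal{B}\subseteq\{f_{ij}\}$ of $E/K$ drawn from the generators, and let $b_i$ be the number of elements of $\mathcal{B}$ lying in the $i$-th block $\{f_{i1},\dots,f_{im_i}\}$. Since $\tau(x)\in K(x)$ for each $x\in\mathcal{B}$, the subfield $K(\mathcal{B})$ is again $\tau$-stable, and a tower computation over the algebraically independent elements of $\mathcal{B}$ (each basis element of block $i$ contributing the factor $\max\{\deg A_{ij},\deg B_{ij}\}=c_i$ by the coprimality hypothesis) gives $[K(\mathcal{B}):\tau(K(\mathcal{B}))]=\prod_{i=1}^n c_i^{\,b_i}$. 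Comparing the two chains $\tau(K(\mathcal{B}))\subseteq\tau(E)\subseteq E$ and $\tau(K(\mathcal{B}))\subseteq K(\mathcal{B})\subseteq E$, and using that $\tau\colon E\to\tau(E)$ is an isomorphism carrying $K(\mathcal{B})$ onto $\tau(K(\mathcal{B}))$, so that $[\tau(E):\tau(K(\mathcal{B}))]=[E:K(\mathcal{B})]$, the common factor $[E:K(\mathcal{B})]$ cancels and I obtain $\delta=\prod_{i=1}^n c_i^{\,b_i}$. The delicate points here are the $\tau$-stability of $K(\mathcal{B})$ and the multiplicativity of the rational-map degree over independent transcendentals; both are routine but must be carried out using the coprimality and the degree normalization $\max\{\deg A_{ij},\deg B_{ij}\}=c_i$.

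The crux is then a matroid-exchange argument exploiting that $\delta$ is basis-free: the value $\prod_{i}c_i^{\,b_i(\mathcal{B})}$ must be one and the same integer for every transcendence basis $\mathcal{B}$ drawn from the generators. Suppose the full family $\{f_{ij}\}$ were algebraically dependent, so that $\trdeg_K E<\sum_i m_i$; then for some block $i_0$ a generator $f_{i_0j_0}$ lies outside $\mathcal{B}$. Its fundamental circuit cannot be contained in block $i_0$, for otherwise that circuit would be a dependent subset of the algebraically independent block $i_0$; hence it meets some other block $i_1\neq i_0$, and exchanging $f_{i_0j_0}$ for a circuit element of block $i_1$ yields a new basis $\mathcal{B}'$ with $b_{i_0}(\mathcal{B}')=b_{i_0}(\mathcal{B})+1$ and $b_{i_1}(\mathcal{B}')=b_{i_1}(\mathcal{B})-1$, all other exponents unchanged. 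Equating the two products forces $c_{i_0}=c_{i_1}$, contradicting $c_{i_0}\neq c_{i_1}$. Therefore every block lies entirely in $\mathcal{B}$, so $\sum_i b_i=\sum_i m_i$ and the whole family is a transcendence basis, i.e.\ algebraically independent over $K$. I expect the main obstacle to be conceptual rather than computational: recognizing that the single invariant $[E:\tau(E)]$, coupled with the distinctness of the $c_i$, is precisely what links the separate blocks, and arranging the exchange so that only the two exponents $b_{i_0},b_{i_1}$ are altered.
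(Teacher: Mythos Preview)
The paper does not prove this theorem: it is quoted from the external reference \cite{S2013} and used only as a black box in the remark following Theorem~\ref{p16a}. There is therefore no in-paper argument to compare your proposal against.

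On its own merits your strategy is sound and is, in outline, the standard one: attach the numerical invariant $\delta=[E:\tau(E)]$ to $E=K(f_{ij})$, show that for any transcendence basis $\mathcal{B}\subseteq\{f_{ij}\}$ one has $\delta=\prod_i c_i^{\,b_i(\mathcal{B})}$, and then use a matroid exchange. The exchange step is correct: if some $f_{i_0j_0}\notin\mathcal{B}$, its fundamental circuit is a dependent subset of $\{f_{ij}\}$, hence cannot lie entirely in the independent block $i_0$, so it meets some block $i_1\neq i_0$; swapping gives a new basis in which only $b_{i_0}$ and $b_{i_1}$ change, forcing $c_{i_0}=c_{i_1}$. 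The degree formula $[K(\mathcal{B}):\tau(K(\mathcal{B}))]=\prod c_i^{b_i}$ is the classical fact that a rational map $A/B$ with $\gcd(A,B)=1$ has field-extension degree $\max\{\deg A,\deg B\}$, multiplicativity over algebraically independent variables being routine.

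There is one genuine technical gap. You write ``the automorphism $\tau$ of $L$'' and then ``as $E/K$ is finitely generated, $E/\tau(E)$ is a finite extension''. Under this paper's conventions a transforming operator is only an injective endomorphism, so $\tau(K)\subsetneq K$ is allowed, and then $K$ need not be contained in $\tau(E)$; in that case $E/\tau(E)$ need not even be algebraic, and $\delta$ is undefined. The standard repair is to pass first to the inversive closure of $\mathcal{K}$ (which does not change $\trdeg_K$ of the $f_{ij}$), after which $\tau(K)=K$ and your argument goes through verbatim; alternatively one can replace $\delta$ by $[E:K(\tau(f_{ij}):i,j)]$ and redo the tower comparison. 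In the paper's intended application ($K=\mathbb{C}$ with $\tau|_{\mathbb{C}}=\mathrm{id}$) the issue does not arise.
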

Indeed, we will see algebraic independence of the differential operator $s$ and the translation operator $h^\lambda$ ($\lambda>0$, see Definition \ref{p5a}) 
over $\mathbb{C}$ in Section \ref{s:dcfield}.  
This implies that the translation operator $h^\lambda$ cannot be written
  algebraically by $s$ unlike the exponential function or Bessel function.

\begin{term}
  A derivation $D\colon R\to R$ of a ring $R$ is a mapping which satisfies
  \[
    D(a+b)=D(a)+D(b),\quad D(ab)=D(a)b+aD(b).
  \]
  A transforming operator $\tau\colon R\to R$ is an injective endomorphism,
  or isomorphism of $R$ into itself which may not be surjective. 
  The automorphism $\tau\colon f(x)\mapsto f(x+1)$ of the field of meromorphic functions
  is a typical example (cf. introductory books \cite{Cohn,Levin2008,PS:book}).  
  A ring with a derivation is called a differential ring,
  and one with a transforming operator a difference ring.
  In this paper, we call a ring with both of them a DT ring, 
  which is short for a ring with a \ul{d}erivation and 
  a \ul{t}ransforming operator.  
\end{term}

In Section \ref{s:opcalc}, we introduce some parts of  
Mikusi\'nski's operational calculus.  
Section \ref{s:dcfield} begins with a definition of the transforming operator $\tau_q$
and includes the way $Q(\mathcal{C})$ has structures
of a $q$-difference field and also a difference field of Mahler type.  
Section \ref{s:qdc} and \ref{s:Mahler} contain
DT rings of convergent power series $\mathbb{C}\{l\}$
and a DT ring of formal power series $\mathbb{C}[[h]]$ respectively.  

\section{Mikusi\'nski's operational calculus}\label{s:opcalc}

In this section, we introduce some parts of 
Mikusi\'nski's operational calculus.  
For the omitted proofs, 
see Mikusi\'nski's textbook \cite{Mikusinski1983}.

Let $\mathcal{C}$ be the set of complex-valued continuous functions 
on $[0,\infty)$.  
A continuous function $f\in\mathcal{C}$ is denoted by the symbol $\{f(t)\}$.  
The set $\mathcal{C}$ is a commutative ring without identity by 
\begin{gather*}
  f+g=\{f(t)\}+\{g(t)\}=\{f(t)+g(t)\},\\
  fg=\{f(t)\}\{g(t)\}=\left\{\int_0^tf(t-\tau)g(\tau)d\tau\right\}.
\end{gather*}
The constant function $l=\{1\}$ is called the integral operator, for
the multiplication of $l$ and $f\in\mathcal{C}$ causes
\[
  lf=\{1\}\{f(t)\}=\left\{\int_0^t 1\cdot f(\tau)d\tau\right\}
  =\left\{\int_0^tf(\tau)d\tau\right\}.
\]
By Titchmarsh's theorem, $\mathcal{C}$ has its quotient field $Q(\mathcal{C})$.

\begin{theorem}[Titchmarsh's theorem]
  For any $f,g\in\mathcal{C}$, $fg=0$ implies $f=0$ or $g=0$.  
\end{theorem}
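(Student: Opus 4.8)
The plan is to prove the logically equivalent support-additivity statement and extract Titchmarsh's theorem from it. For a nonzero $f\in\mathcal{C}$ put $a(f)=\sup\{t\ge 0:f\equiv 0\text{ on }[0,t]\}$, the left endpoint of its support; continuity makes this well defined, and $f\not\equiv 0$ on every interval $[0,a(f)+\varepsilon)$. I would show that if $f,g$ are both nonzero then $f*g$ is nonzero, with $a(f*g)=a(f)+a(g)$. The inequality $a(f*g)\ge a(f)+a(g)$ is immediate: for $t\le a(f)+a(g)$ the integrand $f(t-\tau)g(\tau)$ vanishes for every $\tau\in[0,t]$ (either $\tau\le a(g)$ or $t-\tau\le a(f)$), so $(f*g)(t)=0$. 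The whole difficulty lies in the reverse inequality, i.e.\ that $f*g$ does not continue to vanish past $a(f)+a(g)$.

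First I would reduce to compact support. Since $(f*g)(t)$ for $t\le T$ depends only on the restrictions $f|_{[0,T]},g|_{[0,T]}$, it suffices to establish the sharp statement for compactly supported continuous functions: if $u,v$ are continuous, supported in a bounded interval, with left endpoints $\alpha=a(u)$, $\beta=a(v)$, then $a(u*v)=\alpha+\beta$. Granting this, pick $T>a(f)+a(g)$ and apply it to $u=f\cdot\mathbf{1}_{[0,T]}$, $v=g\cdot\mathbf{1}_{[0,T]}$: their left endpoints are still $a(f),a(g)$, so $u*v$ is nonzero at some $t_0\in(a(f)+a(g),T)$, and there $(f*g)(t_0)=(u*v)(t_0)\neq 0$. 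It is worth stressing that one cannot simply truncate and hope for a globally vanishing convolution, since $u*v$ agrees with $f*g$ only on $[0,T]$; this is exactly why the sharp support version, rather than the bare domain property of the Laplace transform, is needed.

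For the compact case I would pass to Laplace transforms. Writing $U(z)=\int u(t)\e^{-zt}\,dt$, and likewise $V$, convolution becomes multiplication, $\widehat{u*v}=UV$, and by Paley--Wiener these are entire functions of exponential type. The location of the support is encoded in the growth of the transform: the left endpoint $\alpha$ is read off from $\lim_{\sigma\to+\infty}\sigma^{-1}\log|U(\sigma)|=-\alpha$, and more generally from the indicator $h_U(\theta)=\limsup_{r\to\infty}r^{-1}\log|U(r\e^{i\theta})|$, whose supporting values recover the support interval of $u$.

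The crux, and the step I expect to be the main obstacle, is showing that the indicator of a product is the sum of the indicators, $h_{UV}=h_U+h_V$, so that the left endpoint of $u*v$ equals $\alpha+\beta$ and $UV\neq 0$. The inequality $h_{UV}\le h_U+h_V$ is trivial, but the matching lower bound requires the theory of entire functions of exponential type: one controls $U$ and $V$ by Phragm\'en--Lindel\"of on suitable half-planes and invokes regularity of growth of the indicator diagram to rule out cancellation in the relevant direction. This is the genuine analytic content of Titchmarsh's theorem; the alternative is a longer, purely real-variable argument (as in Mikusi\'nski's textbook) obtained by repeatedly integrating the convolution identity and applying a moment and approximation argument, which sidesteps complex analysis at the cost of more intricate estimates.
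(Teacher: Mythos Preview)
The paper does not contain a proof of Titchmarsh's theorem: it is stated in the section introducing Mikusi\'nski's operational calculus, which opens with ``For the omitted proofs, see Mikusi\'nski's textbook,'' and no argument is supplied. So there is nothing in the paper to compare your proposal against; the result is simply quoted as the classical fact that makes $\mathcal{C}$ an integral domain and hence permits forming $Q(\mathcal{C})$.

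For what it is worth, your outline is one of the standard routes---the complex-analytic proof via Paley--Wiener and the growth of entire functions of exponential type---and you correctly locate the real content in the additivity of the indicator (equivalently, of the mean type in the relevant direction) under multiplication; this is where Phragm\'en--Lindel\"of and regularity-of-growth arguments enter. One small technical slip: the truncation $u=f\cdot\mathbf{1}_{[0,T]}$ is generally discontinuous at $T$, so either replace it by a continuous cutoff that equals $1$ on $[0,T]$, or simply drop the word ``continuous'' and work with compactly supported $L^1$ functions, for which the Paley--Wiener machinery still applies. The real-variable alternative you mention at the end is in fact what Mikusi\'nski's textbook presents, and is presumably what the paper intends the reader to consult.
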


For any constant function $\{\alpha\}$, $\{\alpha\}/\{1\}\in Q(\mathcal{C})$
is simply denoted by $\alpha$.  
The identity of the field $Q(\mathcal{C})$ is $1=\{1\}/\{1\}$, and by 
\[
  \{1\}\{0\}=\left\{\int_0^t 0\ d\tau\right\}
  =\{0\},
\]
the zero is $0=\{0\}$.
Then $\mathbb{C}$ is regarded as a subfield of $Q(\mathcal{C})$, for
\begin{gather*}
  \frac{\{\alpha\}}{\{1\}}+\frac{\{\beta\}}{\{1\}}
  =\frac{\{\alpha\}+\{\beta\}}{\{1\}}
  =\frac{\{\alpha+\beta\}}{\{1\}},\\
  \frac{\{\alpha\}}{\{1\}}\cdot\frac{\{\beta\}}{\{1\}}
  =\frac{\left\{\int_0^t\alpha\beta d\tau\right\}}{\{1\}^2}
  =\frac{l\{\alpha\beta\}}{l\{1\}}
  =\frac{\{\alpha\beta\}}{\{1\}}.
\end{gather*}
In addition, for $\alpha\in\mathbb{C}$, 
\[
  \alpha\{f(t)\}=\frac{\{\alpha\}}{\{1\}}\{f(t)\}
  =\frac{\left\{\int_0^t \alpha f(\tau)d\tau\right\}}{\{1\}}
  =\{\alpha f(t)\}.
\]
We call $s=1/l$ the differential operator. Indeed, 
if $f\in\mathcal{C}$ is a $C^1$-function, multiply
\[
  lf'=l\{f'(t)\}=\left\{\int_0^t f'(\tau)d\tau\right\}
  =\{f(t)-f(0)\}=f-\{f(0)\}
\]
by $s=1/\{1\}$. Then we obtain $f'=sf-f(0)$. 

\begin{example}\label{p2a}
  The following are elements of the field $\mathbb{C}(l)=\mathbb{C}(s)$, 
  \begin{align*}
    l^n=&\left\{\frac{t^{n-1}}{(n-1)!}\right\}\quad (n\in\mathbb{Z}_{>0}),\\
    \frac{1}{s-\alpha}=&\{\e^{\alpha t}\},\\
    \frac{1}{(s-\alpha)^n}=&\left\{\frac{t^{n-1}}{(n-1)!}\e^{\alpha t}\right\}
    \quad (n\in\mathbb{Z}_{>0}),\\
    \frac{1}{(s-\alpha)^2+\beta^2}=&\left\{\frac{1}{\beta}\e^{\alpha t}\sin \beta t\right\}\quad (\beta\neq 0),\\
    \frac{s-\alpha}{(s-\alpha)^2+\beta^2}=&\{\e^{\alpha t}\cos \beta t\}.
  \end{align*}
  For example, the second equality is seen by
  \[
    s\{\e^{\alpha t}\}=\{(\e^{\alpha t})'\}+\e^{\alpha 0}=\alpha\{\e^{\alpha t}\}+1.  
  \]
  By the first equation, we find $l\notin\mathbb{C}$.  
  Indeed, while $\alpha\in\mathbb{C}$ multiplied by $l$ is a constant function $\{\alpha\}$, $l$ multiplied by itself is $l^2=\{t\}$.  
  Hence $l$ and $s$ are transcendental over $\mathbb{C}$.  
  
\end{example}

\begin{definition}
  If a complex-valued function $f$ on $[0,\infty)$ has 
  a continuous primitive function 
  $\int_0^t f(\tau)d\tau$ $(0\leq t<\infty)$, then
  \[
    \frac{\{\int_0^t f(\tau)d\tau\}}{l}\in Q(\mathcal{C}).
  \]
  We shall denote it by the symbol $\{f(t)\}^\sharp$. 
  When $f\in \mathcal{C}$, we see
  \[
    \{f(t)\}^\sharp=\frac{\{\int_0^t f(\tau)d\tau\}}{l}
    =\frac{lf}{l}=f=\{f(t)\}.
  \]
  Note that $\{f(t)\}^\sharp=\{g(t)\}^\sharp$ does not mean $f(t)=g(t)$ for every point.  
  Actually, Mikusi\'nski's treatment to discontinuous functions is slightly different from ours (cf. \cite{Mikusinski1983}, Part I, Chapter VII, \S56).  
  While he redefined the equality $\{f(t)\}=\{g(t)\}$ by ignoring a few points, 
  we do not change the meaning of the equality (cf. Fujimoto \cite{Fujimoto}).  
\end{definition}

\begin{definition}[Part I, Chapter VII, \S59]\label{p3b}
  For any $\alpha\in\mathbb{C}$ and $\lambda\in\mathbb{R}$, we define
  a power of $s-\alpha$ by
  \[
    (s-\alpha)^{-\lambda}
    =\begin{cases}
      \left\{\dfrac{t^{\lambda-1}}{\Gamma(\lambda)}\e^{\alpha t}\right\}^\sharp & (\lambda>0),\\
      1 & (\lambda=0),\\
      \dfrac{1}{(s-\alpha)^\lambda} & (\lambda < 0).
    \end{cases}
  \]
  When $\alpha=0$, $s^{-\lambda}$ is denoted by $l^\lambda$.  
  They satisfy
  \[
    (s-\alpha)^{-\lambda}(s-\alpha)^{-\mu}=(s-\alpha)^{-\lambda-\mu}.
  \]
\end{definition}

\begin{example}[Part I, Chapter VII, \S59]\label{p4a}
  By the above formula, $\sqrt{s+\alpha}=(s+\alpha)^{1/2}\in Q(\mathcal{C})$
  satisfies
  $(\sqrt{s+\alpha})^2=s+\alpha$,
  and thus is algebraic over $\mathbb{C}(s)$.  
  When $\alpha>0$, it is related to the error function in the following way,
  \[
    \frac{1}{s\sqrt{s+\alpha}}
    =\left\{\frac{1}{\sqrt{\alpha}}\erf(\sqrt{\alpha t})\right\},\quad
    \erf(t)=\frac{2}{\sqrt{\pi}}\int_0^t\e^{-\tau^2}d\tau.  
  \]
\end{example}

\begin{definition}[Part I, Chapter VII, \S62]\label{p5a}
  Let $\lambda\geq 0$. we define the jump function, or Heaviside's function,
  by
  \[
    H_\lambda(t)=\begin{cases}
      0 & (0\leq t\leq \lambda),\\
      1 & (\lambda <t<\infty),
    \end{cases}
  \]
  and the translation operator by $h^\lambda=s\{H_\lambda(t)\}^\sharp$.  
  Indeed, for any $\lambda>0$, $f\in\mathcal{C}$ is shifted to
  \[
    h^\lambda f=\{g_\lambda(t)\}^\sharp,\quad
    g_\lambda(t)=\begin{cases}
      0 & (0\leq t\leq \lambda),\\
      f(t-\lambda) & (\lambda < t< \infty).  
    \end{cases}
  \]
  Hence for any $\lambda>0$ and $\mu>0$, we see
  \begin{equation}\label{p5c}
    h^\lambda h^\mu=h^{\lambda+\mu}.
  \end{equation}
  Since 
  \[
    h^0=s\frac{\left\{\int_0^t H_0(\tau)d\tau\right\}}{l}
    =\frac{s}{l}\{t\}=\frac{s}{l}l^2=1,
  \]
  this formula holds for any $\lambda\geq 0$ and $\mu\geq 0$. 
  Furthermore, 
  if we define negative-powers of $h$ by
  $h^{-\lambda}=1/h^\lambda$ $(\lambda>0)$,
  the formula \eqref{p5c} holds for any $\lambda,\mu\in\mathbb{R}$.
  In addition, if we simply write $h$ instead of $h^1$, 
  we see 
  \[
    (h)^n=(h^1)^n=h^n
  \]
  for any $n\in\mathbb{Z}$.  
  When $\lambda>0$, we find $h^\lambda\notin\mathbb{C}$.  
  Indeed, while $\alpha\in\mathbb{C}$ multiplied by $l^2$ is 
  $\alpha l^2=\alpha\{t\}=\{\alpha t\}$, 
  \[
    h^\lambda l^2=h^\lambda\{t\}=\{g_\lambda(t)\},\quad
    g_\lambda(t)=\begin{cases}
      0 & (0\leq t\leq \lambda),\\
      t-\lambda & (\lambda < t< \infty). 
    \end{cases}
  \]
  Hence for $\lambda\neq 0$, $h^\lambda$ is transcendental over $\mathbb{C}$.  
  
\end{definition}

\begin{definition}[Part II, Chapter I, \S2--3]\label{p6a}
  We say that a sequence $\{a_n\}_n$ in $Q(\mathcal{C})$ converges 
  if there exists a certain $p\in Q(\mathcal{C})$ such that 
  $\{a_n/p\}_n$ is a sequence in $\mathcal{C}$ and converges uniformly on
  every finite interval.  
  Since the functions $a_n/p$ are continuous, the limit is also continuous.  
  Therefore the limit of $\{a_n\}_n$ is defined by
  \[
    \lim_{n\to\infty}a_n=p\lim_{n\to\infty}\frac{a_n}{p}\in Q(\mathcal{C}).
  \]
  We find that the limit is unique, and
  if $\{a_n\}_n,\{b_n\}_n\subset Q(\mathcal{C})$ converges to $a,b\in Q(\mathcal{C})$ respectively, 
  \[
    \lim_{n\to\infty}(a_n\pm b_n)=a\pm b,\quad
    \lim_{n\to\infty}a_nb_n=ab.
  \]
  Under additional conditions that $b_n\neq 0$ $(n=0,1,2,\dots)$, $b\neq 0$
  and $\{a_n/b_n\}_n$ converges, we see
  \[
    \lim_{n\to\infty}\frac{a_n}{b_n}=\frac{a}{b}. 
  \]
  Series $\sum_{n=0}^\infty a_n$ ($a_n\in Q(\mathcal{C})$)
  is defined in the usual way.  
\end{definition}

\begin{theorem}[Part II, Chapter II, \S4]\label{p7a}
  A series 
  \[
    \sum_{n=0}^\infty\alpha_n h^{\beta_n},\quad
    \alpha_n\in\mathbb{C},\ \beta_n\in\mathbb{R}_{\geq 0}.
  \]
  converges to  
  \[
    \frac{1}{l^2}\left\{\sum_{n=0}^\infty\alpha_n\max\{0,t-\beta_n\}\right\}
  \]
  if $\{\beta_n\}_n$ is monotonically increasing and $\lim_{n\to\infty}\beta_n=+\infty$. 
\end{theorem}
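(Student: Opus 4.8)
The plan is to apply the definition of convergence in $Q(\mathcal{C})$ (Definition \ref{p6a}) with the specific normalizer $p = 1/l^2 = s^2$. Writing $S_N = \sum_{n=0}^N \alpha_n h^{\beta_n}$ for the partial sums, the first step is to compute $S_N/p = S_N l^2$ and verify that it lies in $\mathcal{C}$. For this I would invoke the identity recorded in Definition \ref{p5a}: for each $\lambda \ge 0$ one has $h^\lambda l^2 = h^\lambda\{t\} = \{\max\{0,t-\lambda\}\}$ (using $l^2 = \{t\}$ from Example \ref{p2a}). Summing over $n$ then gives
$$
  \frac{S_N}{p} = \left\{\sum_{n=0}^N \alpha_n \max\{0,t-\beta_n\}\right\},
$$
a finite sum of continuous functions, hence an element of $\mathcal{C}$ as the definition requires.

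The decisive step is to observe that on any finite interval $[0,T]$ the infinite series collapses to a finite sum. Since $\{\beta_n\}_n$ is monotonically increasing with $\beta_n \to +\infty$, there is an index $N_0$ with $\beta_n \ge T$ for all $n \ge N_0$; for such $n$ and every $t\in[0,T]$ we have $t-\beta_n \le 0$, so $\max\{0,t-\beta_n\}=0$. Consequently, for all $N \ge N_0$ the functions $S_N/p$ coincide on $[0,T]$ with the fixed function $\sum_{n=0}^{N_0-1}\alpha_n\max\{0,t-\beta_n\}$. Thus $\{S_N/p\}_N$ is eventually constant on $[0,T]$, hence converges uniformly there, and the same local finiteness shows the limit
$$
  g(t) = \sum_{n=0}^\infty \alpha_n \max\{0,t-\beta_n\}
$$
is well defined and continuous on $[0,\infty)$, i.e. $\{g(t)\}\in\mathcal{C}$.

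Having checked the two hypotheses of Definition \ref{p6a} — that $\{S_N/p\}_N\subset\mathcal{C}$ and converges uniformly on every finite interval — I would conclude
$$
  \lim_{N\to\infty} S_N = p\lim_{N\to\infty}\frac{S_N}{p} = \frac{1}{l^2}\{g(t)\} = \frac{1}{l^2}\left\{\sum_{n=0}^\infty \alpha_n \max\{0,t-\beta_n\}\right\},
$$
which is precisely the asserted value.

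I do not expect a serious obstacle here: the entire argument rests on the single observation that monotonicity and divergence of $\{\beta_n\}_n$ turn the series into a locally finite sum, after which uniform convergence on finite intervals is automatic. The only points deserving minor care are the bookkeeping at the endpoint $t=\beta_n$ (where the corresponding term still vanishes) and confirming that the normalized partial sums genuinely lie in $\mathcal{C}$ rather than merely in $Q(\mathcal{C})$; both follow at once from the explicit formula for $h^\lambda l^2$.
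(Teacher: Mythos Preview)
Your proposal is correct and follows essentially the same approach as the paper: multiply the partial sums by $l^2$ to land in $\mathcal{C}$ via the identity $h^{\beta_n}l^2=\{\max\{0,t-\beta_n\}\}$, then use monotonicity and divergence of $\{\beta_n\}_n$ to see that the resulting sequence is eventually constant on each finite interval, hence uniformly convergent. The only cosmetic difference is that the paper fixes $N_0$ and works on $[0,\beta_{N_0}]$, whereas you fix $T$ first and then choose $N_0$; the content is identical.
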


\begin{proof}
  Since $h^{\beta^n}$ multiplied by $l^2$ is 
  \[
    l^2h^{\beta_n}=h^{\beta_n}\{t\}
    =\left\{\begin{aligned}
      &0&&(0\leq t\leq\beta_n),\\&t-\beta_n&&(\beta_n<t<\infty)
    \end{aligned}\right\}
    =\{\max\{0,t-\beta_n\}\},
  \]
  the partial sum multiplied by the same one is 
  \[
    l^2\sum_{n=0}^N\alpha_nh^{\beta_n}
    =\sum_{n=0}^N\alpha_n\{\max\{0,t-\beta_n\}\}
    =\left\{\sum_{n=0}^N\alpha_n\max\{0,t-\beta_n\}\right\}, 
  \]
  and thus a continuous function.  
  If $N\geq N_0$, then for every $0\leq t\leq \beta_{N_0}$,
  \[
    \sum_{n=0}^N\alpha_n\max\{0,t-\beta_n\}
    =\sum_{n=0}^{N_0}\alpha_n\max\{0,t-\beta_n\}.  
  \]
  Hence $\sum_{n=0}^N\alpha_n\max\{0,t-\beta_n\}$ converges uniformly
  on every finite interval, and thus by definition, 
  \[
    \sum_{n=0}^\infty\alpha_nh^{\beta_n}
    =\frac{1}{l^2}\left\{\sum_{n=0}^\infty\alpha_n\max\{0,t-\beta_n\}\right\}.
  \]
\end{proof}

\begin{lemma}[Part II, Chapter IV, \S15]\label{p8a}
  Let $\mathbb{C}\{X\}$ denote the ring of convergent power series and 
  $\sum_{n=1}^\infty \alpha_nX^n\in\mathbb{C}\{X\}$.  
  Let $f\in\mathcal{C}$, $\lambda_0>0$ and $t_0>0$.  
  Then $\sum_{n=1}^\infty\alpha_n\lambda^nf^n$ converges uniformly 
  on the set
  \[
    \{(\lambda,t)\ |\ 0\leq\lambda\leq \lambda_0,\, 
    0\leq t\leq t_0\}
  \]
  as a series of functions of two variables, 
  $\lambda$ and $t$, 
  where $f^n$ denotes a power by convolution.  
\end{lemma}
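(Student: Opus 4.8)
The plan is to apply the Weierstrass M-test, for which I need a uniform majorant of the general term $\alpha_n\lambda^nf^n$ on the rectangle $0\leq\lambda\leq\lambda_0$, $0\leq t\leq t_0$. The first and main step is to bound the convolution powers $f^n$. Writing $M=\max_{0\leq t\leq t_0}|f(t)|$, which is finite since $f$ is continuous on the compact interval $[0,t_0]$, I would prove by induction on $n$ that
\[
  |f^n(t)|\leq M^n\frac{t^{n-1}}{(n-1)!}\qquad(0\leq t\leq t_0).
\]
The base case $n=1$ is immediate, and the inductive step follows from $f^{n+1}=f^nf$ together with the estimate
\[
  |f^{n+1}(t)|=\left|\int_0^t f^n(t-\tau)f(\tau)\,d\tau\right|
  \leq M^{n+1}\int_0^t\frac{(t-\tau)^{n-1}}{(n-1)!}\,d\tau
  =M^{n+1}\frac{t^n}{n!}.
\]

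Next I would use the hypothesis that $\sum_{n=1}^\infty\alpha_nX^n$ has a positive radius of convergence. This gives $\limsup_n|\alpha_n|^{1/n}<\infty$, so there are constants $C>0$ and $\rho>0$ with $|\alpha_n|\leq C\rho^n$ for all $n$. Combining this with the convolution estimate and the bounds $\lambda\leq\lambda_0$, $t\leq t_0$, I obtain on the rectangle
\[
  |\alpha_n\lambda^nf^n(t)|
  \leq C\rho^n\lambda_0^n\,M^n\frac{t_0^{n-1}}{(n-1)!}.
\]

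Finally, setting $a=\rho\lambda_0 M$ and substituting $k=n-1$, the majorant series is
\[
  \sum_{n=1}^\infty C\frac{a^nt_0^{n-1}}{(n-1)!}
  =Ca\sum_{k=0}^\infty\frac{(at_0)^k}{k!}
  =Ca\,\e^{at_0}<\infty,
\]
where the factorial in the denominator makes this converge regardless of the size of $a$. Since these are constants dominating $|\alpha_n\lambda^nf^n(t)|$ uniformly on the rectangle and their sum is finite, the Weierstrass M-test yields the claimed uniform convergence.

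The only genuinely substantial step is the convolution power estimate of the first paragraph; once that is in hand the rest is routine. I should note that the factorial growth is what drives everything, so the positivity of the radius of convergence of $\sum\alpha_nX^n$ is used only to guarantee the crude bound $|\alpha_n|\leq C\rho^n$, and no finer information about the coefficients is needed.
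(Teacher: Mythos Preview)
Your proof is correct. The key estimate $|f^n(t)|\leq M^n t^{n-1}/(n-1)!$ by induction on the convolution power, followed by the Weierstrass $M$-test with the exponential majorant, is exactly the standard argument. Note, however, that the paper does not supply its own proof of this lemma: it is one of the results imported from Mikusi\'nski's textbook (see the remark at the beginning of Section~\ref{s:opcalc}), so there is nothing in the paper to compare against. Your argument is in fact the proof one finds in Mikusi\'nski.
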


\begin{theorem}[Part II, Chapter IV, \S15]\label{p8b}
  Let 
  $\sum_{n=0}^\infty\alpha_nX^n\in\mathbb{C}\{X\}$.  
  Then for any $f\in\mathcal{C}$, 
  $\sum_{n=1}^\infty \alpha_nf^n$ converges uniformly on every finite interval 
  in $[0,\infty)$ as a series of functions of one variable $t$.  
  By its limit $g\in\mathcal{C}$, 
  $\sum_{n=0}^\infty\alpha_nf^n=\alpha_0+g\in Q(\mathcal{C})$.  
\end{theorem}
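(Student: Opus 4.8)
The plan is to derive Theorem~\ref{p8b} as a direct specialization of Lemma~\ref{p8a}, since the latter already carries all the essential analytic content. Given the convergent power series $\sum_{n=0}^\infty\alpha_nX^n$, its tail $\sum_{n=1}^\infty\alpha_nX^n$ also lies in $\mathbb{C}\{X\}$ (it has the same radius of convergence), so Lemma~\ref{p8a} applies to it. The observation that unlocks everything is that the two-variable series appearing in the lemma degenerates to the desired one-variable series at $\lambda=1$: evaluating $\sum_{n=1}^\infty\alpha_n\lambda^nf^n$ at $\lambda=1$ gives exactly $\sum_{n=1}^\infty\alpha_nf^n$.

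Concretely, I would fix an arbitrary finite interval, i.e.\ an endpoint $t_0>0$, and invoke Lemma~\ref{p8a} with this $t_0$ and with $\lambda_0=1$. This yields uniform convergence of $\sum_{n=1}^\infty\alpha_n\lambda^nf^n$ on the rectangle $\{(\lambda,t)\mid 0\leq\lambda\leq 1,\,0\leq t\leq t_0\}$. Restricting to the slice $\lambda=1$ --- uniform convergence on a product set trivially restricts to uniform convergence in the remaining variable along any slice, since the supremum over the slice is dominated by the supremum over the whole set --- we obtain uniform convergence of $\sum_{n=1}^\infty\alpha_nf^n$ in $t$ on $[0,t_0]$. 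As $t_0$ was arbitrary, this gives uniform convergence on every finite interval.

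It then remains only to identify the limit inside $Q(\mathcal{C})$. Each partial sum $\sum_{n=1}^N\alpha_nf^n$ is a finite sum of convolution powers of the continuous function $f$, hence continuous; being a uniform limit of continuous functions on every finite interval, the limit $g=\sum_{n=1}^\infty\alpha_nf^n$ is itself continuous, so $g\in\mathcal{C}$. Finally the constant term must be treated separately, because $f^0$ is not an element of the ring $\mathcal{C}$ but equals the identity $1$ in $Q(\mathcal{C})$; thus $\sum_{n=0}^\infty\alpha_nf^n=\alpha_0\cdot 1+g=\alpha_0+g$, which lies in $Q(\mathcal{C})$ since $\alpha_0\in\mathbb{C}\subset Q(\mathcal{C})$ and $g\in\mathcal{C}\subset Q(\mathcal{C})$.

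Since the genuine analytic estimates are entirely absorbed into Lemma~\ref{p8a}, I do not expect any real obstacle here; the only points requiring a moment's care are the harmless slicing argument and the bookkeeping of the $n=0$ term, which is precisely what forces the statement to split off $\alpha_0$ from a series starting at $n=1$.
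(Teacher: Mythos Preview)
Your proof is correct and follows essentially the same route as the paper: fix $t_0>0$, apply Lemma~\ref{p8a} with $\lambda_0=1$ to obtain uniform convergence on the rectangle $[0,1]\times[0,t_0]$, then restrict to the slice $\lambda=1$. The paper's version is terser and leaves the continuity of $g$ and the handling of the $\alpha_0$ term implicit, but the argument is the same.
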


\begin{proof}
  Let $t_0>0$. By the above lemma, $\sum_{n=1}^\infty\alpha_n\lambda^nf^n$
  converges uniformly on the set
  \[
    \{(\lambda,t)\ |\ 0\leq\lambda\leq 1,\, 
    0\leq t\leq t_0\}.
  \]
  Hence $\sum_{n=1}^\infty\alpha_nf^n$ converges uniformly on $0\leq t\leq t_0$,
  and thus on every finite interval in $[0,\infty)$. 
\end{proof}

\begin{example}\label{p8c}
  Let $\sum_{n=1}^\infty\alpha_nX^n\in\mathbb{C}\{X\}$.  
  Setting $f=l$ in the theorem, we obtain
  \[
    \sum_{n=1}^\infty\alpha_nl^n
    =\sum_{n=1}^\infty\alpha_n\left\{\frac{t^{n-1}}{(n-1)!}\right\}
    =\left\{\sum_{n=1}^\infty\frac{\alpha_nt^{n-1}}{(n-1)!}\right\}.
  \]
\end{example}

\begin{example}[Part II, Chapter IV, \S18]\label{p10a}
  Let 
  \[
    J_0(t)=\sum_{k=0}^\infty\frac{(-1)^k}{(k!)^2}\left(\frac{t}{2}\right)^{2k}
  \]
  be the Bessel function and $\alpha\in\mathbb{C}$.
  Then we see
  \[
    \{J_0(\alpha t)\}
    =\sum_{k=0}^\infty\binom{-\frac{1}{2}}{k}\alpha^{2k}l^{2k+1}.
  \]
  By the Taylor expansion of 
  \[
    \frac{1}{\sqrt{1+\lambda}}
    =\sum_{k=0}^\infty\binom{-\frac{1}{2}}{k}\lambda^k
    \quad (|\lambda|<1),
  \]
  it is usually denoted by
  \[
    \{J_0(\alpha t)\}=\frac{l}{\sqrt{1+(\alpha l)^2}}
    =\frac{1}{\sqrt{s^2+\alpha^2}}.  
  \]
\end{example}

\begin{theorem}[Part II, Chapter IV, \S14]\label{p9a}
  Let $\lambda_1>0$.  If
  \[
    \sum_{n=0}^\infty a_n\lambda^n,\ 
    \sum_{n=0}^\infty b_n\lambda^n,\quad
    a_n,b_n\in Q(\mathcal{C})
  \]
  converge for every $|\lambda|<\lambda_1$, 
  then for every $|\lambda|<\lambda_1$, 
  \begin{gather*}
    \sum_{n=0}^\infty a_n\lambda^n + \sum_{n=0}^\infty b_n\lambda^n
    =\sum_{n=0}^\infty (a_n+b_n)\lambda^n,\\
    \left(\sum_{n=0}^\infty a_n\lambda^n\right)
    \left(\sum_{n=0}^\infty b_n\lambda^n\right)
    =\sum_{n=0}^\infty c_n\lambda^n,\quad
    c_n=\sum_{i=0}^na_ib_{n-i}.
  \end{gather*}
\end{theorem}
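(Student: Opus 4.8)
The plan is to handle the two identities separately, treating addition as an immediate consequence of the limit laws and reserving the real work for the Cauchy product. For the additive formula I would simply observe that the $N$-th partial sums satisfy $\sum_{n=0}^N a_n\lambda^n + \sum_{n=0}^N b_n\lambda^n = \sum_{n=0}^N (a_n+b_n)\lambda^n$, and then pass to the limit using the rule $\lim(x_N+y_N)=\lim x_N+\lim y_N$ from Definition~\ref{p6a}. This shows at once that $\sum (a_n+b_n)\lambda^n$ converges and equals $A+B$, where $A$ and $B$ denote the two given sums.

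For the product the governing idea is to convert convergence in $Q(\mathcal{C})$, which by Definition~\ref{p6a} means uniform-on-compacts convergence after division by a single element, into honest absolute convergence in the sup norm on each interval $[0,t_0]$, where the classical Cauchy-product theorem applies. Fix $\lambda$ with $|\lambda|<\lambda_1$ and choose $\rho$ with $|\lambda|<\rho<\lambda_1$; this is exactly where the hypothesis that the series converge on the whole disk is used. Since $\sum a_n\rho^n$ converges, there is a $p$ with all $a_n\rho^n/p\in\mathcal{C}$ and with the partial sums of $\sum a_n\rho^n/p$ uniformly convergent on $[0,t_0]$; in particular the terms $a_n\rho^n/p$ tend to $0$ uniformly, hence are bounded there, giving $\|a_n/p\|_{[0,t_0]}\le M\rho^{-n}$ for some $M$. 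Consequently $\|a_n\lambda^n/p\|_{[0,t_0]}\le M(|\lambda|/\rho)^n$ is geometrically summable, so $\sum a_n\lambda^n/p$ converges absolutely in the sup norm on $[0,t_0]$, and by uniqueness of limits its sum is $A/p$. The same argument furnishes a $q$ with $\|b_n/q\|_{[0,t_0]}\le M'\rho^{-n}$ and $\sum b_n\lambda^n/q$ absolutely convergent to $B/q$.

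Now I would work in $\mathcal{C}$ restricted to $[0,t_0]$, where the convolution obeys $\|f\ast g\|_{[0,t_0]}\le t_0\|f\|_{[0,t_0]}\|g\|_{[0,t_0]}$, making it a Banach algebra up to the constant $t_0$. The estimate $\|a_ib_j\lambda^{i+j}/(pq)\|_{[0,t_0]}\le t_0MM'(|\lambda|/\rho)^{i+j}$ shows that the double series $\sum_{i,j}a_ib_j\lambda^{i+j}/(pq)$ converges absolutely. Absolute convergence licenses arbitrary rearrangement and grouping, so its value may be computed both as the product $(\sum_i a_i\lambda^i/p)(\sum_j b_j\lambda^j/q)=AB/(pq)$, by summing rows then columns, and as $\sum_n\lambda^n\sum_{i+j=n}(a_i/p)(b_j/q)=\sum_n c_n\lambda^n/(pq)$, by summing along antidiagonals. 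Hence $\sum_n c_n\lambda^n/(pq)$ converges uniformly on $[0,t_0]$ to $AB/(pq)$, and multiplying back by $pq$ yields $\sum_n c_n\lambda^n=AB$ in $Q(\mathcal{C})$, as desired.

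The main obstacle I anticipate is the bookkeeping around the definition of convergence rather than any deep analytic difficulty: one must produce a single denominator valid on each compact interval and promote $Q(\mathcal{C})$-convergence to sup-norm absolute convergence, and one must check that the denominator $pq$ used for the Cauchy product is compatible with the separately obtained limits $A/p$ and $B/q$, which is where uniqueness of limits from Definition~\ref{p6a} enters. Once the geometric bound $\|a_n/p\|_{[0,t_0]}\le M\rho^{-n}$ is in hand, the remainder is the standard theory of absolutely convergent double series in a Banach algebra.
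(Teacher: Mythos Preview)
The paper does not actually prove Theorem~\ref{p9a}; it is stated with the citation ``[Part II, Chapter IV, \S14]'' and, in keeping with the remark at the start of Section~\ref{s:opcalc} (``For the omitted proofs, see Mikusi\'nski's textbook''), the proof is deferred to \cite{Mikusinski1983}. So there is no in-paper argument to compare against.

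On its own merits your argument is sound and follows the standard route one would expect from Mikusi\'nski's treatment. The key idea---picking an intermediate radius $\rho$ with $|\lambda|<\rho<\lambda_1$, using convergence at $\rho$ to bound $\|a_n/p\|_{[0,t_0]}$ by $M\rho^{-n}$, and thereby upgrading $Q(\mathcal{C})$-convergence to absolute sup-norm convergence---is exactly right, as is the use of the convolution estimate $\|f\ast g\|_{[0,t_0]}\le t_0\|f\|_{[0,t_0]}\|g\|_{[0,t_0]}$ to justify the Cauchy-product rearrangement. Two small points you should make explicit in a final write-up: first, that the denominators $p$ and $q$ come from convergence at $\rho$ and are therefore independent of the particular $t_0$, so the uniform convergence of $\sum c_n\lambda^n/(pq)$ genuinely holds on \emph{every} finite interval (as Definition~\ref{p6a} requires), not just on one fixed $[0,t_0]$; second, that the individual terms $a_n\rho^n/p$ lie in $\mathcal{C}$ because they are differences of consecutive partial sums, each of which lies in $\mathcal{C}$ by hypothesis. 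With those clarifications your proof is complete.
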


\begin{corollary}\label{p9b}
  Given 
  $\sum_{n=0}^\infty\alpha_nX^n,\sum_{n=0}^\infty\beta_nX^n\in\mathbb{C}\{X\}$
  and $f\in\mathcal{C}$, we find 
  \begin{gather*}
    \sum_{n=0}^\infty\alpha_nf^n+\sum_{n=0}^\infty\beta_nf^n
    =\sum_{n=0}^\infty(\alpha_n+\beta_n)f^n,\\
    \left(\sum_{n=0}^\infty\alpha_nf^n\right)
    \left(\sum_{n=0}^\infty\beta_nf^n\right)
    =\sum_{n=0}^\infty\gamma_n f^n,\quad
    \gamma_n=\sum_{i=0}^n\alpha_i\beta_{n-i}.
  \end{gather*}
\end{corollary}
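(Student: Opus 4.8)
The plan is to deduce the corollary from Theorem \ref{p9a} by inserting a complex parameter $\lambda$ and then specializing to $\lambda=1$. Set $a_n=\alpha_nf^n$ and $b_n=\beta_nf^n$ in $Q(\mathcal{C})$, where $f^0=1$ so that $a_0=\alpha_0$ and $b_0=\beta_0$. With this choice one has $\sum_{n=0}^\infty a_n\lambda^n=\sum_{n=0}^\infty\alpha_n(\lambda f)^n$, and likewise for the $b_n$, because the scalar $\lambda$ pulls out of the convolution power: $(\lambda f)^n=\lambda^nf^n$.

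First I would verify the convergence hypotheses of Theorem \ref{p9a}. For each fixed $\lambda\in\mathbb{C}$ the scalar multiple $\lambda f=\{\lambda f(t)\}$ again lies in $\mathcal{C}$, so Theorem \ref{p8b} applies verbatim to the element $\lambda f$ and shows that $\sum_{n=0}^\infty\alpha_n(\lambda f)^n=\sum_{n=0}^\infty a_n\lambda^n$ converges in $Q(\mathcal{C})$; the same argument handles $\sum_{n=0}^\infty b_n\lambda^n$. Hence both series converge for every $\lambda$, in particular on any disc $|\lambda|<\lambda_1$ with $\lambda_1>1$, which is exactly what Theorem \ref{p9a} requires.

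With the hypotheses in place, Theorem \ref{p9a} yields, for $|\lambda|<\lambda_1$, the additive identity $\sum a_n\lambda^n+\sum b_n\lambda^n=\sum(a_n+b_n)\lambda^n$ and the multiplicative identity $\bigl(\sum a_n\lambda^n\bigr)\bigl(\sum b_n\lambda^n\bigr)=\sum c_n\lambda^n$ with $c_n=\sum_{i=0}^na_ib_{n-i}$. Here $a_n+b_n=(\alpha_n+\beta_n)f^n$, while the Cauchy coefficient collapses because every summand carries the same convolution power: $c_n=\sum_{i=0}^n\alpha_i\beta_{n-i}f^if^{n-i}=\bigl(\sum_{i=0}^n\alpha_i\beta_{n-i}\bigr)f^n=\gamma_nf^n$. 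Specializing $\lambda=1$ then delivers both assertions of the corollary.

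I do not anticipate a genuine obstacle here; the only point needing care is the convergence hypothesis of Theorem \ref{p9a}, and it is dispatched by the observation that $\lambda f$ is itself an element of $\mathcal{C}$. This lets me invoke the single-element convergence of Theorem \ref{p8b} for each fixed $\lambda$, rather than re-establishing the joint uniform convergence in the two variables $\lambda$ and $t$ that underlies Lemma \ref{p8a}.
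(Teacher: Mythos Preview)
Your proposal is correct and follows exactly the paper's approach: the paper's proof also sets $a_n=\alpha_nf^n$, $b_n=\beta_nf^n$, invokes Theorem \ref{p8b} to obtain convergence of $\sum\alpha_n\lambda^nf^n$ and $\sum\beta_n\lambda^nf^n$ for every $\lambda\in\mathbb{C}$, and then applies Theorem \ref{p9a} with $\lambda_1=2$. Your write-up is simply more explicit about the step $\lambda f\in\mathcal{C}$ that justifies the appeal to Theorem \ref{p8b}.
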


\begin{proof}
  By Theorem \ref{p8b}, 
  $\sum_{n=0}^\infty\alpha_n\lambda^nf^n$ and 
  $\sum_{n=0}^\infty\beta_n\lambda^nf^n$
  ($\lambda\in\mathbb{C}$) converge.  
  Hence apply Theorem \ref{p9a} under $\lambda_1=2$, $a_n=\alpha_nf^n$ and
  $b_n=\beta_nf^n$.  
\end{proof}

\begin{definition}[Part III, Chapter V, \S37]\label{p11a}
  Let $\alpha\in\mathbb{C}$.  
  Define the automorphism $T^\alpha\colon Q(\mathcal{C})\to Q(\mathcal{C})$ by
  \[
    T^\alpha f=\{\e^{\alpha t}f(t)\},\quad f\in\mathcal{C}.
  \]
  For any $\beta\in\mathbb{C}$, we see $T^\alpha\beta=\beta$, 
  and for the differential operator $s$,  
  \[
    T^\alpha s=\frac{T^\alpha 1}{T^\alpha\{1\}}
    =\frac{1}{\{\e^{\alpha t}\}}=s-\alpha.  
  \]
\end{definition}

\begin{definition}[Part III, Chapter VIII, \S57]\label{p11b}
  Define the mapping $D\colon \mathcal{C}\to\mathcal{C}$ by
  \[
    Da=\{-ta(t)\}.  
  \]
  Then it satisfies 
  \begin{gather*}
    D(a+b)=Da+Db,\\
    D(ab)=D(a)b+aD(b),
  \end{gather*}
  and is uniquely extended to a derivation of $Q(\mathcal{C})$ by
  \[
    D\left(\frac{a}{b}\right)=\frac{D(a)b-aD(b)}{b^2}.
  \]
  For any $\alpha\in \mathbb{C}$,
  \[
    D\alpha=D\left(\frac{\{\alpha\}}{\{1\}}\right)
    =\frac{D(\{\alpha\})\{1\}-\{\alpha\}D(\{1\})}{\{1\}^2}
    =\frac{\{-\alpha t\}\{1\}-\{\alpha\}\{-t\}}{\{1\}^2}=0,
  \]
  and for the differential operator $s$,
  \[
    Ds=D\left(\frac{1}{\{1\}}\right)
    =\frac{D(1)\{1\}-1D(\{1\})}{\{1\}^2}
    =\frac{-\{-t\}}{\{t\}}=1. 
  \]
  In this sense, Mikusi\'nski writes $D=d/ds$.  
\end{definition}

\section{Difference fields on $Q(\mathcal{C})$}\label{s:dcfield}

Definition \ref{p11a} shows that $(Q(\mathcal{C}),T^\alpha)$ is a difference field.  
In addition, for any $a,b\in\mathcal{C}$ $(b\neq 0)$, 
\[
  \frac{d}{ds}T^\alpha a=\frac{d}{ds}\{\e^{\alpha t}a(t)\}
  =\{-t\e^{\alpha t}a(t)\}=T^\alpha\{-ta(t)\}=T^\alpha\frac{d}{ds}a,
\]
and thus
\begin{equation*}
  \begin{aligned}
  \frac{d}{ds}T^\alpha\left(\frac{a}{b}\right)
  &=\frac{d}{ds}\left(\frac{T^\alpha a}{T^\alpha b}\right)
  =\frac{\left(\dfrac{d}{ds}T^\alpha a\right)(T^\alpha b)-(T^\alpha a)\left(\dfrac{d}{ds}T^\alpha b\right)}{(T^\alpha b)^2}\\
  &=\frac{\left(T^\alpha\dfrac{d}{ds} a\right)(T^\alpha b)-(T^\alpha a)\left(T^\alpha\dfrac{d}{ds} b\right)}{(T^\alpha b)^2}\\
  &=T^\alpha\frac{d}{ds}\left(\frac{a}{b}\right).
  \end{aligned}
\end{equation*}
Therefore $(Q(\mathcal{C}),d/ds,T^\alpha)$ is 
a DT field with 
\[
  \dfrac{d}{ds}T^\alpha=T^\alpha\dfrac{d}{ds}.
\]

In this section, we shall define another transforming operator.  

\begin{definition}\label{p13a}
  For $q>0$, let $\tau_q\colon\mathcal{C}\to\mathcal{C}$ denote the bijection 
   defined by
  \[
    \tau_qa=\{qa(qt)\}.
  \]
\end{definition}

\begin{lemma}\label{p13b}
  For any $a,b\in\mathcal{C}$, 
  \begin{gather*}
    \tau_q(a+b)=\tau_qa+\tau_qb,\\
    \tau_q(ab)=\tau_q(a)\tau_q(b).
  \end{gather*}
\end{lemma}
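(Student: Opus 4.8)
The plan is to verify the two identities directly from the definition $\tau_q a = \{qa(qt)\}$, proving additivity first (which is immediate) and then multiplicativity (which requires a change of variables inside the convolution integral). For additivity, observe that $\tau_q(a+b) = \{q(a+b)(qt)\} = \{qa(qt) + qb(qt)\}$, and since addition in $\mathcal{C}$ is pointwise, this splits as $\{qa(qt)\} + \{qb(qt)\} = \tau_q a + \tau_q b$. No calculation beyond unwinding notation is needed here.

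The multiplicative identity is the substantive part. First I would write out both sides explicitly. The product $ab$ in $\mathcal{C}$ is the convolution, so $ab = \{\int_0^t a(t-\sigma)b(\sigma)\,d\sigma\}$, and applying $\tau_q$ gives
\[
  \tau_q(ab) = \left\{q\int_0^{qt} a(qt-\sigma)b(\sigma)\,d\sigma\right\}.
\]
On the other hand, the right-hand side is the convolution of $\tau_q a = \{qa(qt)\}$ and $\tau_q b = \{qb(qt)\}$, namely
\[
  \tau_q(a)\tau_q(b) = \left\{\int_0^t q a\bigl(q(t-\tau)\bigr)\, q b(q\tau)\,d\tau\right\}
  = \left\{q^2\int_0^t a\bigl(q(t-\tau)\bigr) b(q\tau)\,d\tau\right\}.
\]
The task is to show these two expressions define the same continuous function of $t$.

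The key step is the substitution $\sigma = q\tau$ in the first integral. Under this change of variable, $d\sigma = q\,d\tau$, the limit $\sigma = qt$ corresponds to $\tau = t$, and $qt - \sigma = qt - q\tau = q(t-\tau)$, so
\[
  q\int_0^{qt} a(qt-\sigma)b(\sigma)\,d\sigma
  = q\int_0^{t} a\bigl(q(t-\tau)\bigr) b(q\tau)\, q\,d\tau
  = q^2\int_0^t a\bigl(q(t-\tau)\bigr) b(q\tau)\,d\tau,
\]
which is exactly the integrand appearing in $\tau_q(a)\tau_q(b)$. This establishes $\tau_q(ab) = \tau_q(a)\tau_q(b)$. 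I do not anticipate a genuine obstacle: the only point requiring any care is matching the extra factor of $q$ (the leading $q$ in the definition of $\tau_q$ combines with the Jacobian $q$ of the substitution to produce the $q^2$ on the right-hand side), so keeping the powers of $q$ straight is the one place where an error could creep in. Since $a,b$ are continuous and the substitution is a smooth bijection of the integration interval, the resulting functions are continuous, so both sides genuinely lie in $\mathcal{C}$.
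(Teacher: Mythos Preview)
Your proof is correct and matches the paper's own argument essentially line for line: both verify additivity by unwinding the definition and prove multiplicativity via the substitution $\sigma=q\tau$ in the convolution integral, the only cosmetic difference being that the paper starts from $\tau_q(a)\tau_q(b)$ and substitutes to reach $\tau_q(ab)$ while you run the substitution in the other direction.
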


\begin{proof}
  The first formula is seen by 
  \[
    \tau_q(a+b)=\tau_q\{a(t)+b(t)\}
    =\{q(a(qt)+b(qt))\}
    =\tau_qa+\tau_qb.
  \]
  The second formula is proved in the following way,
  \begin{equation*}
  \begin{aligned}
    \tau_q(a)\tau_q(b)&=\{qa(qt)\}\{qb(qt)\}\\
    &=\left\{\int_0^tqa(qt-q\tau)qb(q\tau)d\tau\right\}\\
    &=\left\{\int_0^{qt}qa(qt-\sigma)qb(\sigma)\frac{d\sigma}{q}\right\}\\
    &=\tau_q\left\{\int_0^ta(t-\sigma)b(\sigma)d\sigma\right\}\\
    &=\tau_q(ab).
  \end{aligned}
  \end{equation*}
\end{proof}

\begin{lemma}\label{p14a}
  The bijection $\tau_q$ is extended to $\tau\colon Q(\mathcal{C})\to Q(\mathcal{C})$ by
  \[
    \tau\left(\frac{a}{b}\right)=\frac{\tau_q a}{\tau_q b}\quad
    (a,b\in\mathcal{C},\ b\neq \{0\}).
  \]
  It is well-defined, and an automorphism of $Q(\mathcal{C})$.  
  We keep using the symbol $\tau_q$ instead of $\tau$.  
\end{lemma}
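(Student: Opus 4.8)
The plan is to treat this as the standard construction extending a ring isomorphism of an integral domain to its field of fractions; the only genuine content will be the well-definedness check and making sure no denominator is allowed to vanish. First I would record that $\tau_q$ is in fact a ring automorphism of $\mathcal{C}$: it is a bijection by Definition \ref{p13a}, and it is both additive and multiplicative by Lemma \ref{p13b}. In particular $\tau_q$ is injective, and since $\tau_q\{0\}=\{q\cdot 0\}=\{0\}$, injectivity yields $\tau_q(b)\neq\{0\}$ whenever $b\neq\{0\}$. This is what guarantees that the proposed value $\tau_q(a)/\tau_q(b)$ always denotes a legitimate element of $Q(\mathcal{C})$, so that the defining formula makes sense before we even ask whether it is independent of representatives.

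Next I would verify that $\tau$ is well-defined. Because $\mathcal{C}$ is an integral domain (Titchmarsh's theorem), two fractions satisfy $a/b=a'/b'$ in $Q(\mathcal{C})$ if and only if $ab'=a'b$ in $\mathcal{C}$. Applying $\tau_q$ to this identity and using its multiplicativity gives $\tau_q(a)\tau_q(b')=\tau_q(a')\tau_q(b)$, which is exactly the condition $\tau_q(a)/\tau_q(b)=\tau_q(a')/\tau_q(b')$. Hence the assignment descends to a well-defined map $\tau\colon Q(\mathcal{C})\to Q(\mathcal{C})$.

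Then I would check the homomorphism properties by the usual common-denominator computations, pushing $\tau_q$ through sums and products term by term. Multiplicativity is immediate from $\tau((a/b)(c/d))=\tau_q(ac)/\tau_q(bd)=(\tau_q(a)/\tau_q(b))(\tau_q(c)/\tau_q(d))$, and additivity follows by expanding $a/b+c/d=(ad+bc)/(bd)$ and applying Lemma \ref{p13b} to numerator and denominator. Since $\tau(\{1\}/\{1\})=\{1\}/\{1\}=1$ and $\tau$ is not identically zero, it is a field homomorphism and therefore automatically injective. Finally, for surjectivity I would exploit that $\tau_q$ is a bijection of $\mathcal{C}$: given $a/b\in Q(\mathcal{C})$, choose preimages $a=\tau_q(a_0)$ and $b=\tau_q(b_0)$ with $b_0\neq\{0\}$, so that $\tau(a_0/b_0)=a/b$; equivalently, the map built from $\tau_q^{-1}$ is a two-sided inverse of $\tau$. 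Thus $\tau$ is an automorphism of $Q(\mathcal{C})$.

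I do not expect a real obstacle here, since the argument is just the functoriality of the fraction-field construction. The only step demanding any care is the well-definedness, where both the multiplicativity of $\tau_q$ (to transport the relation $ab'=a'b$) and its injectivity (to keep every denominator nonzero) are genuinely used; everything else is routine bookkeeping that I would state briefly rather than expand in full.
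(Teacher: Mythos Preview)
Your proposal is correct and follows essentially the same route as the paper: well-definedness via multiplicativity on the relation $ab'=a'b$, the standard common-denominator verification of the ring homomorphism laws, and surjectivity from the bijectivity of $\tau_q$ on $\mathcal{C}$. You add the explicit check that $\tau_q(b)\neq\{0\}$ (which the paper leaves tacit), while the paper adds the one-line verification that $\tau$ really restricts to $\tau_q$ on $\mathcal{C}$; both are routine and neither changes the argument.
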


\begin{proof}
  (well-definedness) Suppose $a/b=c/d$, which means $ad=bc$.  
  Hence we obtain 
  \[
    \tau_q(a)\tau_q(d)=\tau_q(b)\tau_q(c),
  \]
  and thus
  \[
    \frac{\tau_q (a)}{\tau_q (b)}=\frac{\tau_q (c)}{\tau_q (d)}.
  \]
  
  (Extension) An element $a\in\mathcal{C}$ is actually
  $ab/b$ for any non-zero $b\in\mathcal{C}$ in $Q(\mathcal{C})$, and thus
  \[
    \tau\left(\frac{al}{l}\right)=\frac{\tau_q(a)\tau_q(l)}{\tau_q(l)}
  \]
  implies $\tau(a)=\tau_q(a)$.  
  
  (Automorphism) We find $\tau$ is homomorphic in the usual way,
  \begin{equation*}
    \begin{aligned}
    \tau\left(\frac{a}{b}+\frac{c}{d}\right)
    &=\tau\left(\frac{ad+bc}{bd}\right)
    =\frac{\tau_q(a)\tau_q(d)+\tau_q(b)\tau_q(c)}{\tau_q(b)\tau_q(d)}
    =\frac{\tau_q(a)}{\tau_q(b)}+\frac{\tau_q(c)}{\tau_q(d)}\\
    &=\tau\left(\frac{a}{b}\right)+\tau\left(\frac{c}{d}\right),
    \end{aligned}
  \end{equation*}
  \begin{equation*}
    \tau\left(\frac{a}{b}\cdot\frac{c}{d}\right)
    =\frac{\tau_q(ac)}{\tau_q(bd)}
    =\frac{\tau_q(a)}{\tau_q(b)}\cdot\frac{\tau_q(c)}{\tau_q(d)}
    =\tau\left(\frac{a}{b}\right)\tau\left(\frac{c}{d}\right).
  \end{equation*}
  Since $\tau_q\colon\mathcal{C}\to\mathcal{C}$ is surjective, $\tau$ is automorphic.  
\end{proof}

\begin{theorem}\label{p15a}
  The difference field $(Q(\mathcal{C}),\tau_q)$ has the following properties.  
  \begin{enumerate}
  \item\label{p15a1} $\tau_q\alpha=\alpha$ $(\alpha\in\mathbb{C})$.
  \item\label{p15a2} $\tau_ql=ql$.
  \item\label{p15a3} $\tau_qs=s/q$. 
  \item\label{p15a4} The derivation $D$ defined by 
    \[D=-s^2\dfrac{d}{ds}\]
    satisfies $Dl=1$, by which 
    $(Q(\mathcal{C}),D,\tau_q)$ is a DT field with 
    \[D\tau_q=q\tau_qD\quad 
    \left(\text{or}\quad lD\tau_q=\tau_q\left(lD\right)\right).\]
  \end{enumerate}
\end{theorem}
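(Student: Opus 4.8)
The plan is to dispatch (i)--(iii) by unwinding the definition of $\tau_q$ on $\mathcal{C}$ together with the automorphism property of Lemma \ref{p14a}, and then to treat the substantive part (iv) separately. For (i), writing $\alpha=\{\alpha\}/\{1\}$ I would compute $\tau_q\{\alpha\}=\{q\alpha\}=q\alpha\{1\}$ and $\tau_q\{1\}=\{q\}=q\{1\}$, so that the ratio collapses to $\alpha$. For (ii), since $l=\{1\}$ one reads off $\tau_q l=\{q\}=q\{1\}=ql$ directly. For (iii), because $\tau_q$ is an automorphism and $s=1/l$, I get $\tau_q s=1/\tau_q l=1/(ql)=s/q$.

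For (iv) I first note that $D=-s^2\,d/ds$ is again a derivation: additivity is clear, and for any fixed element $g$ the map $f\mapsto g\cdot(d/ds)f$ inherits the Leibniz rule from $d/ds$, so $D(ab)=D(a)b+aD(b)$. To obtain $Dl=1$ I would use $(d/ds)l=(d/ds)\{1\}=\{-t\}=-\{t\}=-l^2$ (recall $l^2=\{t\}$ from Example \ref{p2a}), whence $Dl=-s^2(-l^2)=(sl)^2=1$.

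The heart of the argument is the commutation $D\tau_q=q\tau_q D$, and the key input is how $d/ds$ interacts with $\tau_q$ on $\mathcal{C}$. For $a\in\mathcal{C}$ I would compute the two compositions
\[
  \frac{d}{ds}\tau_q a=\frac{d}{ds}\{qa(qt)\}=\{-qt\,a(qt)\},
  \qquad
  \tau_q\frac{d}{ds}a=\tau_q\{-ta(t)\}=\{-q^2t\,a(qt)\},
\]
so that $\tau_q(d/ds)a=q\,(d/ds)\tau_q a$; the extra factor $q$ is exactly the chain-rule factor coming from the inner $qt$. Combining this with $\tau_q(-s^2)=-(s/q)^2$ from (iii) gives $D\tau_q a=q\,\tau_q D a$ for every $a\in\mathcal{C}$. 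I would then propagate this to all of $Q(\mathcal{C})$: applying the quotient rules for the derivation $D$ and for the automorphism $\tau_q$ to $f=a/b$ and substituting $D(\tau_q a)=q\tau_q(Da)$ and $D(\tau_q b)=q\tau_q(Db)$ shows $D\tau_q(a/b)=q\tau_q D(a/b)$. The equivalent form $lD\tau_q=\tau_q(lD)$ then follows because $ls^2=s$ reduces $D$ to $-s\,d/ds$, and the factor $1/q$ from $\tau_q s=s/q$ cancels the $q$ coming from the commutation of $d/ds$ with $\tau_q$.

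The only place demanding care is the bookkeeping of the scaling factor $q$: it must be tracked consistently through the inner argument $qt$ in the chain-rule computation, through $\tau_q s^2=s^2/q^2$, and finally through the passage to the quotient field, where it must reproduce both equivalent forms $D\tau_q=q\tau_q D$ and $lD\tau_q=\tau_q(lD)$.
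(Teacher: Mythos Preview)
Your proposal is correct and follows essentially the same route as the paper: parts (i)--(iii) are handled identically, and for (iv) both you and the paper verify the commutation $D\tau_q=q\tau_q D$ first on $\mathcal{C}$ (you via the intermediate relation $\tau_q(d/ds)=q\,(d/ds)\tau_q$, the paper by computing $D\tau_q a$ and $\tau_q D a$ directly) and then pass to $Q(\mathcal{C})$ by the quotient rule. The only cosmetic difference is your computation of $Dl$ via $(d/ds)\{1\}=\{-t\}=-l^2$, whereas the paper writes $(d/ds)s^{-1}=-s^{-2}$; these are the same calculation.
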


\begin{proof}
  (\ref{p15a1})
  \[
    \tau_q\alpha=\tau_q\left(\frac{\{\alpha\}}{\{1\}}\right)
    =\frac{\{q\alpha\}}{\{q\}}
    =\frac{\alpha\{q\}}{\{q\}}
    =\alpha.
  \]
  (\ref{p15a2})
  \[
    \tau_ql=\tau_q\{1\}=\{q\}=q\{1\}=ql.
  \]
  (\ref{p15a3})
  \[
    \tau_qs=\tau_q\left(\frac{1}{l}\right)
    =\frac{1}{ql}=\frac{s}{q}.
  \]
  (\ref{p15a4})
  \[
    Dl=-s^2\frac{d}{ds}s^{-1}=-s^2(-s^{-2})=1.
  \]
  For any $a\in\mathcal{C}$, 
  \[
    \tau_qDa=\tau_q(-s^2\{-ta(t)\})
    =-\frac{s^2}{q^2}\{-q^2ta(qt)\}
    =s^2\{ta(qt)\},
  \]
  and thus
  \[
    D\tau_qa=D\{qa(qt)\}
    =-s^2\{-qta(qt)\}=qs^2\{ta(qt)\}=q\tau_qDa.
  \]
  Hence for any $a,b\in\mathcal{C}$ $(b\neq 0)$,
  \begin{equation*}
    \begin{aligned}
    D\tau_q\left(\frac{a}{b}\right)
    &=D\frac{\tau_q(a)}{\tau_q(b)}
    =\frac{\left(D\tau_q(a)\right)\tau_q(b)-\tau_q(a)\left(D\tau_q(b)\right)}{\tau_q(b)^2}\\
    &=\frac{\left(q\tau_qDa\right)\tau_q(b)-\tau_q(a)\left(q\tau_qDb\right)}{\tau_q(b)^2}\\
    &=q\tau_qD\left(\frac{a}{b}\right).
    \end{aligned}
  \end{equation*}
\end{proof}

Therefore $(Q(\mathcal{C}),\tau_q)$ is a difference overfield of 
the field of rational functions $\mathbb{C}(l)$ as a $q$-difference field.  
Moreover, $Q(\mathcal{C})/\mathbb{C}(h)$ becomes a difference field extension
of Mahler type when $q$ is a unit fraction.  

\begin{theorem}\label{p16a}
  Suppose $q$ is a unit fraction, that is  $q=1/d$, $d\in\mathbb{Z}_{>1}$.  
  We use the symbol $\sigma_d=\tau_q$ for clarity.  
  Then it satisfies $\sigma_dh=h^d$.  
  Define the derivation $D'$ by
  \[D'=-h^{-1}\dfrac{d}{ds},\] 
  so that $D'h=1$.  
  Then $(Q(\mathcal{C}),D',\sigma_d)$ is a DT field with
  \[D'\sigma_d=dh^{d-1}\sigma_dD'.\]
\end{theorem}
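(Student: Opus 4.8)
The plan is to isolate three facts and prove them in order: the shift identity $\sigma_d h=h^d$, the normalization $D'h=1$, and finally the commutation rule, reducing the last of these to the relation $D\tau_q=q\tau_q D$ already established in Theorem \ref{p15a} for the derivation $D=-s^2\frac{d}{ds}$. Throughout I would use that $Q(\mathcal{C})$ is a field, so an identity between two elements may be verified after multiplying by a convenient nonzero element such as $l^2=\{t\}$ and then cancelling.

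For $\sigma_d h=h^d$ (with $q=1/d$) I would apply both sides to $l^2$. On one side, the shift formula $h^\lambda l^2=\{\max\{0,t-\lambda\}\}$ used in Theorem \ref{p7a} gives $h^d l^2=\{\max\{0,t-d\}\}$. On the other side, since $\tau_q$ is multiplicative with $\tau_q l=ql$, I get $\tau_q(h)\,\tau_q(l^2)=\tau_q(hl^2)=\tau_q\{\max\{0,t-1\}\}=\{\tfrac1d\max\{0,\tfrac{t}{d}-1\}\}$; the rescaling $\max\{0,\tfrac{t}{d}-1\}=\tfrac1d\max\{0,t-d\}$ together with $\tau_q(l^2)=\tfrac{1}{d^2}l^2$ reduces this to $\tau_q(h)\,l^2=\{\max\{0,t-d\}\}$. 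Comparing the two computations and cancelling $l^2$ yields $\sigma_d h=h^d$.

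The key step, which I expect to be the main obstacle, is the derivative computation $\frac{d}{ds}h=-h$; once it is in hand, $D'h=-h^{-1}\frac{d}{ds}h=1$ is immediate. I would write $h=s^2w$ with $w=hl^2=\{\max\{0,t-1\}\}$, apply the product rule to obtain $\frac{d}{ds}h=2sw+s^2\frac{d}{ds}w$, and use $\frac{d}{ds}w=\{-t\max\{0,t-1\}\}$. The nontrivial point is to express $\{t\max\{0,t-1\}\}$ operationally: I would decompose $t\max\{0,t-1\}=(t-1)\max\{0,t-1\}+\max\{0,t-1\}$, recognize the first summand as the shift $h\{t^2\}=2hl^3$ and the second as $w=l^2h$, so that $\{t\max\{0,t-1\}\}=2hl^3+l^2h$. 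Substituting and simplifying with $sl=1$ (that is $sl^2=l$, $s^2l^3=l$, $s^2l^2=1$) collapses the expression to $\frac{d}{ds}h=2lh-2lh-h=-h$.

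Finally, since $\frac{d}{ds}$ is a derivation and $D'=-h^{-1}\frac{d}{ds}$ is a field multiple of it, $D'$ is again a derivation; together with the automorphism $\sigma_d=\tau_q$ of Lemma \ref{p14a} this makes $(Q(\mathcal{C}),D',\sigma_d)$ a DT field. For the commutation rule I would pass to the operator identity $D'=\frac{l^2}{h}D$ (equivalently $D=\frac{h}{l^2}D'$), which follows from $\frac{d}{ds}=-l^2D$. Applying $D'\sigma_d$ to an arbitrary $x\in Q(\mathcal{C})$ and inserting $D\sigma_d=\frac1d\sigma_d D$ gives $D'\sigma_d x=\frac{l^2}{dh}\sigma_d(Dx)$; then rewriting $Dx=\frac{h}{l^2}D'x$ and pushing $\sigma_d$ through the product by multiplicativity, using $\sigma_d\!\left(\frac{h}{l^2}\right)=\frac{h^d}{(l/d)^2}=\frac{d^2h^d}{l^2}$, the scalar factors collapse to $dh^{d-1}$, yielding $D'\sigma_d=dh^{d-1}\sigma_d D'$.
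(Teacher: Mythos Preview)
Your argument is correct and, for the shift identity $\sigma_d h=h^d$ and the commutation law, essentially mirrors the paper's proof: both multiply through by $l^2$ to work with the concrete function $\{\max\{0,t-\lambda\}\}$, and both derive the commutation by translating between $D'$ and $D=-s^2\frac{d}{ds}$ and invoking $D\tau_q=q\tau_qD$ from Theorem~\ref{p15a}.

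The one genuine methodological difference is in establishing $\frac{d}{ds}h=-h$. The paper proceeds analytically: after the product rule it rewrites $2s\{\max\{0,t-1\}\}$ as $2s^2\{\int_0^t\max\{0,\tau-1\}\,d\tau\}$, evaluates the antiderivative piecewise, and then combines the resulting piecewise polynomial with $-t\max\{0,t-1\}$ to recover $-\max\{0,t-1\}$. You instead stay inside the operational calculus, using the algebraic splitting $t\max\{0,t-1\}=(t-1)\max\{0,t-1\}+\max\{0,t-1\}$ and recognizing the pieces as $h\{t^2\}=2hl^3$ and $w=l^2h$; the cancellation then happens purely in $Q(\mathcal{C})$ via $sl=1$. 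Your route avoids the explicit integral and the piecewise bookkeeping, at the modest cost of invoking the shift action $h^\lambda f$ from Definition~\ref{p5a}; the paper's route is more self-contained at the level of functions but involves more calculus. Both are equally rigorous and of comparable length.
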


\begin{proof}
  We have seen in the proof of Theorem \ref{p7a} that
  for any $\lambda\geq 0$,
  \[
    h^\lambda=\frac{1}{l^2}\{\max\{0,t-\lambda\}\}.
  \]
  Hence $\sigma_dh$ is calculated as follows, 
  \[
    \sigma_dh=\tau_qh=\frac{1}{q^2l^2}\{q\max\{0,qt-1\}\}
    =\frac{1}{l^2}\{\max\{0,t-\dfrac{1}{q}\}\}
    =h^d.
  \]
  To prove $D'h=1$, we start with 
  \begin{equation*}
    \begin{aligned}
    \frac{d}{ds}h&=\frac{d}{ds}(s^2\{\max\{0,t-1\}\})\\
    &=2s\{\max\{0,t-1\}\}+s^2\{-t\max\{0,t-1\}\}\\
    &=2s^2\left\{\int_0^t\max\{0,\tau-1\}d\tau\right\}
    -s^2\{t\max\{0,t-1\}\}.\\
    \end{aligned}
  \end{equation*}
  By
  \begin{equation*}
    \begin{aligned}
    \int_0^t\max\{0,\tau-1\}d\tau
    &=\begin{cases}
      0 & (0\leq t\leq 1),\\
      \displaystyle \int_1^t\tau-1d\tau & (1<t<\infty) 
    \end{cases}\\
    &=\begin{cases}
      0 & (0\leq t\leq 1),\\
      \dfrac{t^2}{2}-t+\dfrac{1}{2} & (1<t<\infty),
    \end{cases}
    \end{aligned}
  \end{equation*}
  we see
  \begin{equation*}
    \frac{d}{ds}h
    =s^2\left\{
      \begin{aligned}
      & 0 && (0\leq t\leq 1),\\
      & -t+1 && (1<t<\infty)
      \end{aligned}
    \right\}
    =-h.
  \end{equation*}
  Hence by definition,
  \[
    D'h=-\frac{1}{h}\frac{d}{ds}h=1.
  \]
  Lastly, for any $a\in Q(\mathcal{C})$, 
  \begin{equation*}
    \begin{aligned}
    \sigma_dD'a
    &=\sigma_d\left(-\frac{1}{h}\frac{d}{ds}a\right)
    =-\frac{1}{h^d}\tau_q\frac{d}{ds}a
    =-\frac{1}{h^d}\tau_q\left(-l^2Da\right)\\
    &=-\frac{1}{h^d}(-q^2l^2)\left(\frac{1}{q}D\tau_qa\right)
    =\frac{ql^2}{h^d}\left(-s^2\frac{d}{ds}\sigma_da\right)\\
    &=-\frac{q}{h^d}\left(-hD'\sigma_da\right)
    =\frac{q}{h^{d-1}}D'\sigma_da,
    \end{aligned}
  \end{equation*}
  where $D=-s^2(d/ds)$ is the same one as in Theorem \ref{p15a}.  
  Therefore we obtain
  \[
    D'\sigma_da=dh^{d-1}\sigma_dD'a.
  \]
\end{proof}

\begin{remark}
  Using Theorem \ref{th_deg} under $\mathcal{K}=(\mathbb{C},\textrm{id})$
  and $\mathcal{L}=(Q(\mathcal{C}),\sigma_2)$, 
  we can prove algebraic independence of $s$ and $h^\lambda$ $(\lambda>0)$
  over $\mathbb{C}$.  
  Let $\lambda>0$.  Then we find that $h^\lambda$ satisfies 
  $\sigma_2h^\lambda=(h^\lambda)^2$ by 
  \[
    \sigma_2h^\lambda=\frac{4}{l^2}\{2^{-1}\max\{0,2^{-1}t-\lambda\}\}
    =\frac{1}{l^2}\{\max\{0,t-2\lambda\}\}
    =h^{2\lambda}=(h^\lambda)^2.
  \]
  It is a first-order rational difference equation of degree 2.  
  On the other hand, we have $\sigma_2s=2s$, which is one of degree 1.  
  Set $c_1=1$, $c_2=2$, $f_{11}=s$ and $f_{21}=h^\lambda$ in Theorem \ref{th_deg}.
  Since each of $s$ and $h^\lambda$ is transcendental over $\mathbb{C}$
  (see Example \ref{p2a} and Definition \ref{p5a}), 
  they are algebraically independent over $\mathbb{C}$.  
  In other words, the translation operator $h^\lambda$ $(\lambda>0)$ 
  cannot be expressed
  algebraically by $s$ unlike the exponential function or Bessel function.  
\end{remark}

\section{The $q$-difference subring $\mathbb{C}\{l\}$}\label{s:qdc}

\begin{definition}\label{p18a}
  In this section, we shall consider the set
  \[
    \mathbb{C}\{l\}
    =\left\{\sum_{n=0}^\infty\alpha_nl^n\ \middle|\ \sum_{n=0}^\infty\alpha_nX^n\in\mathbb{C}\{X\}\right\}\subset Q(\mathcal{C}).
  \]
  It is well-defined by Theorem \ref{p8b}, 
  and a subring of $Q(\mathcal{C})$ by Corollary \ref{p9b}.  
\end{definition}

\begin{lemma}\label{p18b}
  The expression $\sum_{n=0}^\infty\alpha_nl^n\in\mathbb{C}\{l\}$ 
  $(\sum_{n=0}^\infty \alpha_n X^n\in\mathbb{C}\{X\})$ is unique, 
  and thus $\mathbb{C}\{l\}\cong\mathbb{C}\{X\}\subset\mathbb{C}[[X]]$
  as rings.  
  Since $l$ is transcendental over $\mathbb{C}$, 
  $\mathbb{C}[[l]]$ denotes the ring of formal power series. 
  Therefore, we may think $\mathbb{C}\{l\}\hookrightarrow\mathbb{C}[[l]]$
  as rings in the natural way.  
\end{lemma}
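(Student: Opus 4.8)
The plan is to exhibit the natural surjection
\[
  \Phi\colon \mathbb{C}\{X\}\longrightarrow\mathbb{C}\{l\},\qquad
  \sum_{n=0}^\infty\alpha_nX^n\longmapsto\sum_{n=0}^\infty\alpha_nl^n,
\]
and prove it is a ring isomorphism. Surjectivity is immediate from Definition \ref{p18a}, and Corollary \ref{p9b} applied with $f=l$ shows that $\Phi$ respects both addition and multiplication (and clearly sends $1$ to $1$), so it is a ring homomorphism. The only substantive point left is injectivity of $\Phi$, which is exactly the asserted uniqueness of the expression: it suffices to show that $\sum_{n=0}^\infty\alpha_nl^n=0$ forces $\alpha_n=0$ for all $n$.

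To prove this I would reduce the equation from the quotient field $Q(\mathcal{C})$ down to the ring $\mathcal{C}$ of genuine continuous functions by multiplying through by $l$. Shifting the index in Example \ref{p8c} gives
\[
  l\sum_{n=0}^\infty\alpha_nl^n
  =\sum_{n=0}^\infty\alpha_nl^{n+1}
  =\left\{\sum_{n=0}^\infty\frac{\alpha_nt^n}{n!}\right\},
\]
so the hypothesis $\sum_{n=0}^\infty\alpha_nl^n=0$ yields $\bigl\{\sum_{n=0}^\infty\alpha_nt^n/n!\bigr\}=l\cdot 0=\{0\}$. This is now an equality in $\mathcal{C}$, hence an equality of continuous functions on $[0,\infty)$ under our (unmodified) convention for equality; in particular the real-analytic function $\phi(t)=\sum_{n=0}^\infty\alpha_nt^n/n!$ vanishes identically on $[0,\infty)$. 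The identity theorem for power series then forces every coefficient $\alpha_n/n!$ to vanish, so $\alpha_n=0$ for all $n$, giving injectivity and hence that $\Phi$ is an isomorphism $\mathbb{C}\{l\}\cong\mathbb{C}\{X\}$.

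Composing with the inclusion $\mathbb{C}\{X\}\subset\mathbb{C}[[X]]$ and the identification $\mathbb{C}[[X]]\cong\mathbb{C}[[l]]$—legitimate precisely because $l$ is transcendental over $\mathbb{C}$, so that $\mathbb{C}[l]$ is honestly a polynomial ring—produces the desired embedding $\mathbb{C}\{l\}\hookrightarrow\mathbb{C}[[l]]$. The one step demanding care is the passage from $Q(\mathcal{C})$ to $\mathcal{C}$: multiplying by $l$ is what turns an identity among field elements into an identity among actual functions amenable to the power-series identity theorem, and it sidesteps the awkward bookkeeping one would otherwise face in separating the constant term, where the field element $\alpha_0=\{\alpha_0\}/\{1\}$ must be distinguished from the constant function $\{\alpha_0\}$.
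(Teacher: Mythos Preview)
Your proof is correct and follows essentially the same route as the paper: both multiply the vanishing series by $l$, invoke Example~\ref{p8c} to identify the result with the continuous function $\bigl\{\sum_n \alpha_n t^n/n!\bigr\}$, and then read off $\alpha_n=0$ from the identity theorem. Your write-up is somewhat more explicit about the ring-homomorphism framing and the reason for the multiply-by-$l$ trick, but the argument is the same.
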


\begin{proof}
  Let $\sum_{n=0}^\infty\alpha_n X^n\in\mathbb{C}\{X\}$ and suppose 
  $
    \sum_{n=0}^\infty\alpha_nl^n=0. 
  $
  Multiplying both sides by $l$, we obtain the following by Example \ref{p8c}, 
  \[
    \{0\}=l\sum_{n=0}^\infty\alpha_nl^n=\sum_{n=0}^\infty\alpha_nl^{n+1}
    =\sum_{n=1}^\infty\alpha_{n-1}l^n
    =\left\{\sum_{n=1}^\infty\frac{\alpha_{n-1}t^{n-1}}{(n-1)!}\right\}.
  \]
  Since it implies
  \[
    \sum_{n=1}^\infty\frac{\alpha_{n-1}t^{n-1}}{(n-1)!}=0\quad
    (t\in[0,\infty)),
  \]
  we see $\alpha_0=\alpha_1=\alpha_2=\dots=0$.  
\end{proof}

\begin{theorem}\label{p19a}
  Let $\sum_{n=0}^\infty\alpha_nX^n\in\mathbb{C}\{X\}$.  Then
  \begin{align*}
    \tau_q\left(\sum_{n=0}^\infty\alpha_nl^n\right)
    =&\sum_{n=0}^\infty\alpha_nq^nl^n,\\
    D\left(\sum_{n=0}^\infty\alpha_nl^n\right)
    =&\sum_{n=1}^\infty n\alpha_nl^{n-1},\quad D=-s^2(d/ds).
  \end{align*}
  Hence $\mathbb{C}\{l\}$ is a DT subring of $(Q(\mathcal{C}),D,\tau_q)$,
  and its quotient field $Q(\mathbb{C}\{l\})$ is a DT subfield.  
  Moreover, we may think $Q(\mathbb{C}\{l\})$ is a DT subfield of 
  the field $\mathbb{C}((l))$ of formal power series. 
\end{theorem}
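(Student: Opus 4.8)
The plan is to dissolve the only apparent difficulty—commuting $\tau_q$ and $D$ past the infinite summation—by first contracting the tail of the series into a single continuous function and then using that both $\tau_q$ and $d/ds$ act on $\mathcal{C}$ by an explicit pointwise recipe. Concretely, I would write $\sum_{n=0}^\infty\alpha_nl^n=\alpha_0+\{g(t)\}$, where by Example \ref{p8c} the function $g(t)=\sum_{n=1}^\infty\alpha_nt^{n-1}/(n-1)!$ is a single (in fact entire) continuous function, the series converging uniformly on every finite interval. The payoff of this reduction is that applying $\tau_q$ or $d/ds$ to the one function $\{g(t)\}$ is a direct use of the defining formulas $\tau_q\{g(t)\}=\{qg(qt)\}$ and $(d/ds)\{g(t)\}=\{-tg(t)\}$, with no operator-versus-limit interchange at this stage.

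For the first identity I would compute $\tau_q(\alpha_0+\{g(t)\})=\alpha_0+\{qg(qt)\}$ via Theorem \ref{p15a}(\ref{p15a1}), expand $qg(qt)=\sum_{n=1}^\infty\alpha_nq^nt^{n-1}/(n-1)!$, and read this backwards through Example \ref{p8c} to recover $\sum_{n=1}^\infty\alpha_nq^nl^n$. Here one checks that $\sum\alpha_nq^nX^n$ again lies in $\mathbb{C}\{X\}$, since multiplying the coefficients by $q^n$ merely rescales the radius of convergence. For the derivation I would use $D=-s^2(d/ds)$ together with the fact that $D$ annihilates the constant $\alpha_0$ (Definition \ref{p11b}) to get $D(\alpha_0+\{g(t)\})=s^2\{tg(t)\}$. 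Expanding $tg(t)=\sum_{n=1}^\infty\alpha_nt^n/(n-1)!$ and using $l^{n+1}=\{t^n/n!\}$ from Example \ref{p2a} identifies $\{tg(t)\}=\sum_{n=1}^\infty n\alpha_nl^{n+1}$.

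The single genuine limit interchange of the whole argument occurs at this last point: pulling the fixed factor $s^2=l^{-2}$ inside the convergent series $\sum_{n=1}^\infty n\alpha_nl^{n+1}$. This is exactly where I expect the real care to be needed, and it is justified by the continuity of multiplication by a fixed field element under the convergence of Definition \ref{p6a}. Since $s^2l^{n+1}=l^{n-1}$, this yields $D(\sum_n\alpha_nl^n)=\sum_{n=1}^\infty n\alpha_nl^{n-1}$, whose coefficient sequence $(n\alpha_n)$ has the same radius of convergence as $(\alpha_n)$ (it is the formal derivative), so the result again lies in $\mathbb{C}\{l\}$.

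Finally, the two formulas show $\mathbb{C}\{l\}$ is stable under both $\tau_q$ and $D$, so the restrictions make it a DT subring; a derivation and an injective endomorphism of an integral domain extend uniquely to its field of fractions, which gives the DT subfield $Q(\mathbb{C}\{l\})$. For the last assertion I would invoke the embedding $\mathbb{C}\{l\}\hookrightarrow\mathbb{C}[[l]]$ of Lemma \ref{p18b}, pass to fraction fields to obtain $Q(\mathbb{C}\{l\})\hookrightarrow\mathbb{C}((l))$, and equip $\mathbb{C}((l))$ with the scaling automorphism $l\mapsto ql$ and the formal derivative $d/dl$; the two computed formulas say precisely that this embedding intertwines $\tau_q$ and $D$ with these operations, so $Q(\mathbb{C}\{l\})$ sits inside $\mathbb{C}((l))$ as a DT subfield.
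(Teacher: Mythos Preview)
Your proposal is correct and follows essentially the same route as the paper: both contract the tail via Example \ref{p8c} to a single function $g(t)=\sum_{n\ge 1}\alpha_nt^{n-1}/(n-1)!$, apply the pointwise recipes for $\tau_q$ and $d/ds$, and then read the resulting entire function back through Example \ref{p8c}. You are if anything slightly more explicit than the paper in flagging the pull of $s^2$ through the series and in spelling out the passage to $Q(\mathbb{C}\{l\})\hookrightarrow\mathbb{C}((l))$, which the paper leaves implicit.
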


\begin{proof} Each of the equations is seen as follows,
  \begin{equation*}
    \begin{aligned}
    \tau_q\left(\sum_{n=0}^\infty\alpha_nl^n\right)
    &=\tau_q\left(\alpha_0+\sum_{n=1}^\infty\alpha_nl^n\right)
    =\alpha_0+\tau_q\left\{\sum_{n=1}^\infty\frac{\alpha_nt^{n-1}}{(n-1)!}\right\}\\
    &=\alpha_0+\left\{q\sum_{n=1}^\infty\frac{\alpha_nq^{n-1}t^{n-1}}{(n-1)!}\right\}\\
    &=\alpha_0+\sum_{n=1}^\infty\alpha_nq^nl^n
    =\sum_{n=0}^\infty\alpha_nq^nl^n,
    \end{aligned}
  \end{equation*}
  \begin{equation*}
    \begin{aligned}
    D\left(\sum_{n=0}^\infty\alpha_nl^n\right)
    &=-s^2\frac{d}{ds}\left(\alpha_0+\sum_{n=1}^\infty\alpha_nl^n\right)
    =-s^2\frac{d}{ds}\left\{\sum_{n=1}^\infty\frac{\alpha_nt^{n-1}}{(n-1)!}\right\}\\
    &=-s^2\left\{-t\sum_{n=1}^\infty\frac{\alpha_nt^{n-1}}{(n-1)!}\right\}
    =s^2\left\{\sum_{n=1}^\infty\frac{\alpha_nt^n}{(n-1)!}\right\}\\
    &=s^2\left\{\sum_{n=2}^\infty\frac{\alpha_{n-1}t^{n-1}}{(n-2)!}\right\}
    =s^2\left\{\sum_{n=2}^\infty\frac{(n-1)\alpha_{n-1}t^{n-1}}{(n-1)!}\right\}\\
    &=s^2\sum_{n=2}^\infty(n-1)\alpha_{n-1}l^n
    =s^2\sum_{n=1}^\infty n\alpha_nl^{n+1}
    =\sum_{n=1}^\infty n\alpha_nl^{n-1}.
    \end{aligned}
  \end{equation*}
\end{proof}

By the way, $T^\alpha$ defined in Definition \ref{p11a} satisfies
\[
  T^\alpha l=T^\alpha\left(\frac{1}{s}\right)=\frac{1}{s-\alpha}
  =\frac{l}{1-\alpha l}.  
\]
We will see that it makes $\mathbb{C}\{l\}$ a difference subring of 
$(Q(\mathcal{C}),T^\alpha)$.

\begin{theorem}\label{p21a}
  Let $\sum_{n=0}^\infty\alpha_nX^n\in\mathbb{C}\{X\}$.  Then
  \[
    T^\alpha\left(\sum_{n=0}^\infty\alpha_nl^n\right)
    =\sum_{n=0}\beta_nl^n,
  \]
  where $\beta_n$ is determined by 
  \[
    \sum_{n=0}^\infty\beta_nZ^n
    =\sum_{n=0}^\infty\alpha_n\left(\frac{Z}{1-\alpha Z}\right)^n
    \in\mathbb{C}\{Z\}.
  \]
  The derivative is 
  \[
    \frac{d}{ds}\left(\sum_{n=0}^\infty\alpha_nl^n\right)
    =\sum_{n=1}^\infty -n\alpha_nl^{n+1}.
  \]
  Therefore $\mathbb{C}\{l\}$ is a DT subring of $(Q(\mathcal{C}),d/ds,T^\alpha)$, and its quotient field $Q(\mathbb{C}\{l\})$
  is a DT subfield.  
  Moreover, we may think $Q(\mathbb{C}\{l\})$ is a DT subfield of
  $\mathbb{C}((l))$.  
\end{theorem}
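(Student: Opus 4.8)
The plan is to reduce the $T^\alpha$-formula to the power-series identity $\sum_n\beta_nZ^n=\sum_n\alpha_n(Z/(1-\alpha Z))^n$, exploiting that $\sum_{n\geq1}\alpha_nl^n$ is \emph{already} a single continuous function, so that $T^\alpha$ can be applied by its very definition on $\mathcal{C}$ with no interchange of limits. First I would invoke Theorem \ref{p8b} and Example \ref{p8c} to write $\sum_{n=0}^\infty\alpha_nl^n=\alpha_0+\{p(t)\}$ with $p(t)=\sum_{n=1}^\infty\frac{\alpha_nt^{n-1}}{(n-1)!}\in\mathcal{C}$. Since $T^\alpha$ fixes $\mathbb{C}$ and acts on $\mathcal{C}$ by $f\mapsto\{\e^{\alpha t}f(t)\}$ (Definition \ref{p11a}), applying it gives $T^\alpha(\sum\alpha_nl^n)=\alpha_0+\{\e^{\alpha t}p(t)\}$ directly, with no appeal to continuity of $T^\alpha$ on the whole field. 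It therefore remains to identify the continuous function $\e^{\alpha t}p(t)$ with $\sum_{n\geq1}\beta_nl^n$.

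The next step is the power-series bookkeeping. Expanding $(Z/(1-\alpha Z))^n=\sum_{m\geq n}\binom{m-1}{n-1}\alpha^{m-n}Z^m$ identifies the coefficient as $\beta_m=\sum_{n=1}^m\binom{m-1}{n-1}\alpha_n\alpha^{m-n}$; a Cauchy-product rearrangement of $(\sum_{n\geq1}\frac{\alpha_n}{(n-1)!}t^{n-1})(\sum_{k\geq0}\frac{(\alpha t)^k}{k!})$ then yields the entire-function identity $\sum_{m\geq1}\frac{\beta_mt^{m-1}}{(m-1)!}=\e^{\alpha t}p(t)$. In parallel I must check that $\sum_m\beta_mZ^m$ actually lies in $\mathbb{C}\{Z\}$: this is the classical theorem on composition of convergent power series (choose $\rho>0$ with $\max_{|Z|\leq\rho}|Z/(1-\alpha Z)|$ below the radius of convergence of $\sum\alpha_nZ^n$), and it is the only substantial analytic input. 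Granting it, Example \ref{p8c} turns the entire-function identity into $\{\e^{\alpha t}p(t)\}=\sum_{n\geq1}\beta_nl^n$ in $Q(\mathcal{C})$, and, since $\beta_0=\alpha_0$, this completes the formula for $T^\alpha$.

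The derivative formula is immediate from Theorem \ref{p19a}: writing $\frac{d}{ds}=-l^2D$ with $D=-s^2\frac{d}{ds}$ and using $D(\sum\alpha_nl^n)=\sum_{n\geq1}n\alpha_nl^{n-1}$ gives $\frac{d}{ds}(\sum\alpha_nl^n)=-l^2\sum_{n\geq1}n\alpha_nl^{n-1}=-\sum_{n\geq1}n\alpha_nl^{n+1}$. Both formulas keep the resulting coefficient sequences inside $\mathbb{C}\{Z\}$---for $T^\alpha$ by the composition theorem, for $d/ds$ because multiplying coefficients by $n$ and shifting the exponent does not change the radius of convergence---so $\mathbb{C}\{l\}$ is closed under both operators and is a DT subring; applying the same formula to $T^{-\alpha}$ shows that $T^\alpha$ restricts to an automorphism. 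Passing to quotients (the quotient rule for $d/ds$, and $T^\alpha(a/b)=T^\alpha a/T^\alpha b$) shows $Q(\mathbb{C}\{l\})$ is a DT subfield, and via the isomorphism $\mathbb{C}\{l\}\cong\mathbb{C}\{X\}\subset\mathbb{C}[[X]]$ of Lemma \ref{p18b}---under which both operators are given purely by their action on coefficients---it embeds as a DT subfield of $\mathbb{C}((l))$. The main obstacle is the analytic fact that the composed series $\sum_n\alpha_n(Z/(1-\alpha Z))^n$ converges; everything else is the Cauchy-product computation and mechanical transport through the earlier results.
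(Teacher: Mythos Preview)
Your proposal is correct and follows essentially the same route as the paper: separate the constant term, write $\sum_{n\geq1}\alpha_nl^n$ as the single continuous function $\{p(t)\}$ via Example~\ref{p8c}, apply $T^\alpha$ pointwise on $\mathcal{C}$, identify $\beta_m=\sum_{n=1}^m\binom{m-1}{n-1}\alpha_n\alpha^{m-n}$ by expanding $(Z/(1-\alpha Z))^n$, and match it with the Cauchy product $\e^{\alpha t}p(t)$; the derivative is handled identically via $d/ds=-l^2D$ and Theorem~\ref{p19a}. The only difference is that you make explicit the analytic step the paper leaves tacit---namely that $\sum_n\alpha_n(Z/(1-\alpha Z))^n$ is again a convergent power series---which is indeed needed for the conclusion $\mathbb{C}\{l\}$ is $T^\alpha$-stable.
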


\begin{proof}
  Firstly, we shall check
  \[
    \beta_{n+1}=\sum_{i=0}^n\binom{n}{i}\alpha^{n-i}\alpha_{i+1}.
  \]
  In the definition of $\beta_n$, the right side is 
  \begin{equation*}
    \begin{aligned}
    \sum_{n=0}^\infty\alpha_n\left(\frac{Z}{1-\alpha Z}\right)^n
    &=\sum_{n=0}^\infty\alpha_nZ^n(1+\alpha Z+\alpha^2Z^2+\cdots)^n\\
    &=\alpha_0+\sum_{n=1}^\infty\alpha_nZ^n\left(\sum_{i=0}^\infty\binom{i+n-1}{n-1}\alpha^iZ^i\right)\\
    &=\alpha_0+\sum_{n=1}^\infty\left(\sum_{j=1}^n\alpha_j\binom{n-1}{j-1}\alpha^{n-j}\right)Z^n.
    \end{aligned}
  \end{equation*}
  Hence 
  \[
    \beta_{n+1}=\sum_{j=1}^{n+1}\alpha_j\binom{n}{j-1}\alpha^{n+1-j}
    =\sum_{i=0}^n\alpha_{i+1}\binom{n}{i}\alpha^{n-i}.
  \]
  Sencondly, by the above equality, 
  \begin{equation*}
    \begin{aligned}
    T^\alpha\left(\sum_{n=0}^\infty\alpha_nl^n\right)
    &=\alpha_0+T^\alpha\left\{\sum_{n=1}^\infty\frac{\alpha_nt^{n-1}}{(n-1)!}\right\}
    =\alpha_0+\left\{\e^{\alpha t}\sum_{n=1}^\infty\frac{\alpha_nt^{n-1}}{(n-1)!}\right\}\\
    &=\alpha_0+\left\{
    \left(\sum_{n=0}^\infty\frac{\alpha^n}{n!}t^n\right)
    \left(\sum_{n=0}^\infty\frac{\alpha_{n+1}}{n!}t^n\right)\right\}\\
    &=\alpha_0+\left\{
    \sum_{n=0}^\infty\left(\sum_{i=0}^n\frac{\alpha^{n-i}}{(n-i)!}\frac{\alpha_{i+1}}{i!}\right)t^n\right\}\\
    &=\alpha_0+\left\{
    \sum_{n=0}^\infty\left(\sum_{i=0}^n\binom{n}{i}\alpha^{n-i}\alpha_{i+1}\right)\frac{t^n}{n!}\right\}\\
    &=\alpha_0+\left\{\sum_{n=0}^\infty\beta_{n+1}\frac{t^n}{n!}\right\}
    =\alpha_0+\left\{\sum_{n=1}^\infty\frac{\beta_nt^{n-1}}{(n-1)!}\right\}\\
    &=\sum_{n=0}^\infty\beta_nl^n.
    \end{aligned}
  \end{equation*}
  Lastly, the derivative is caluculated as follows,
  \[
    \frac{d}{ds}\left(\sum_{n=0}^\infty\alpha_nl^n\right)
    =-l^2D\left(\sum_{n=0}^\infty\alpha_nl^n\right)
    =-l^2\sum_{n=1}^\infty n\alpha_nl^{n-1}
    =\sum_{n=1}^\infty -n\alpha_nl^{n+1}.
  \]
\end{proof}

\section{The difference subring $\mathbb{C}[[h]]$ of Mahler type}\label{s:Mahler}

\begin{definition}\label{p23a}
  In this section, we shall consider the set
  \[
    \mathbb{C}[[h]]
    =\left\{\sum_{n=0}^\infty\alpha_nh^n\ \middle|\ \alpha_n\in\mathbb{C}\right\}
    \subset Q(\mathcal{C}),
  \]
  which is well-defined by Theorem \ref{p7a}.  
\end{definition}

\begin{lemma}\label{p23b}
  The set $\mathbb{C}[[h]]$ is a subring of $Q(\mathcal{C})$
  and satisfies the following, 
  \begin{gather*}
    \sum_{n=0}^\infty\alpha_nh^n+\sum_{n=0}^\infty\beta_nh^n
    =\sum_{n=0}^\infty(\alpha_n+\beta_n)h^n,\\
    \left(\sum_{n=0}^\infty\alpha_nh^n\right)
    \left(\sum_{n=0}^\infty\beta_nh^n\right)
    =\sum_{n=0}^\infty\left(\sum_{i=0}^n\alpha_i\beta_{n-i}\right)h^n.
  \end{gather*}
\end{lemma}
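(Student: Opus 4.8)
The plan is to reduce everything to the convergence result Theorem \ref{p7a} and the power-series arithmetic of Theorem \ref{p9a}, mirroring the proof of Corollary \ref{p9b}. First I would observe that every element $\sum_{n=0}^\infty\alpha_nh^n$ of $\mathbb{C}[[h]]$ is a genuine element of $Q(\mathcal{C})$: applying Theorem \ref{p7a} with exponents $\beta_n=n$, which is monotonically increasing with $\lim_{n\to\infty}n=+\infty$, shows the series converges. For the addition formula I would compare partial sums term by term and pass to the limit using the rule $\lim(a_N+b_N)=\lim a_N+\lim b_N$ from Definition \ref{p6a}; since the right-hand side $\sum_{n=0}^\infty(\alpha_n+\beta_n)h^n$ again converges by Theorem \ref{p7a}, this simultaneously proves the formula and establishes closure under addition.

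For the multiplication formula, the key device is to introduce an auxiliary parameter $\lambda$ and set $a_n=\alpha_nh^n$ and $b_n=\beta_nh^n\in Q(\mathcal{C})$. The point is that for every complex $\lambda$ the series $\sum_{n=0}^\infty a_n\lambda^n=\sum_{n=0}^\infty(\alpha_n\lambda^n)h^n$ converges, again by Theorem \ref{p7a} applied with the complex coefficients $\alpha_n\lambda^n$ and exponents $n$, and likewise for $\sum_n b_n\lambda^n$. I would then invoke Theorem \ref{p9a} with, say, $\lambda_1=2$, obtaining $\left(\sum_n a_n\lambda^n\right)\left(\sum_n b_n\lambda^n\right)=\sum_n c_n\lambda^n$ with $c_n=\sum_{i=0}^n a_ib_{n-i}$ for all $|\lambda|<2$. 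Using $h^ih^{n-i}=h^n$ from \eqref{p5c} gives $c_n=\bigl(\sum_{i=0}^n\alpha_i\beta_{n-i}\bigr)h^n$, and evaluating at $\lambda=1$ yields the stated Cauchy-product formula. Since the coefficients $\gamma_n=\sum_{i=0}^n\alpha_i\beta_{n-i}$ are complex, the product again lies in $\mathbb{C}[[h]]$, giving closure under multiplication.

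To conclude that $\mathbb{C}[[h]]$ is a subring I would note that $0=\sum_n 0\cdot h^n$ and $1=h^0$ lie in it, that it is closed under negation (replace $\alpha_n$ by $-\alpha_n$) and hence, together with the addition formula, is an additive subgroup, and that the multiplication formula makes it closed under products.

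The main obstacle, and really the only nontrivial point, is justifying the application of Theorem \ref{p9a}, namely verifying that the $\lambda$-indexed series converge on a disc of positive radius. This is exactly where Theorem \ref{p7a} does the work: the monotone exponent sequence $\beta_n=n$ tending to $+\infty$ makes its hypotheses automatic for arbitrary complex coefficients, so convergence holds for all $\lambda$, not merely for $|\lambda|<2$. Everything else is the same bookkeeping as in Corollary \ref{p9b}.
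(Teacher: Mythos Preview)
Your proposal is correct and follows essentially the same approach as the paper: verify that the $\lambda$-parametrized series $\sum_n\alpha_n\lambda^nh^n$ and $\sum_n\beta_n\lambda^nh^n$ converge for all $\lambda$ via Theorem~\ref{p7a}, then invoke Theorem~\ref{p9a} and specialize to $\lambda=1$. The only difference is that you handle the addition formula directly via partial sums and Definition~\ref{p6a}, whereas the paper obtains both formulas at once from Theorem~\ref{p9a}; this is a cosmetic variation.
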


\begin{proof}
  Since $\sum_{n=0}^\infty\alpha_nh^n\lambda^n$ and $\sum_{n=0}^\infty\beta_nh^n\lambda^n$
  converge for any $\lambda\in\mathbb{C}$, 
  the above equations follow from Theorem \ref{p9a}.  
  Hence $\mathbb{C}[[h]]$ is a subring of $Q(\mathcal{C})$.
\end{proof}

\begin{lemma}[\cite{Mikusinski1983}, Part II, Chapter II, \S5]\label{p24a}
  The expression $\sum_{n=0}^\infty\alpha_nh^n\in\mathbb{C}[[h]]$ is unique,
  and thus $\mathbb{C}[[h]]\cong\mathbb{C}[[X]]$ as rings.  
\end{lemma}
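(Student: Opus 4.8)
The plan is to prove the uniqueness statement first---that $\sum_{n=0}^\infty\alpha_n h^n=0$ forces all $\alpha_n=0$---and then to combine it with the operation formulas of Lemma~\ref{p23b} to obtain the ring isomorphism. The starting point is Theorem~\ref{p7a} applied with $\beta_n=n$: the sequence $(n)_{n\geq 0}$ is monotonically increasing with $\lim_{n\to\infty}n=+\infty$, so that
\[
  \sum_{n=0}^\infty\alpha_n h^n
  =\frac{1}{l^2}\left\{\sum_{n=0}^\infty\alpha_n\max\{0,t-n\}\right\}.
\]
Supposing the left-hand side is $0$ and multiplying by $l^2$, I would reduce the task to showing that if the continuous function $F(t)=\sum_{n=0}^\infty\alpha_n\max\{0,t-n\}$ vanishes identically on $[0,\infty)$, then every $\alpha_n=0$.

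The key point is that $F$ is piecewise linear, and on the interval $[N,N+1]$ each summand with index $n>N$ vanishes because there $t\leq n$; hence $F(t)=\sum_{n=0}^N\alpha_n(t-n)$ on $[N,N+1]$. I would then induct on $N$. On $[0,1]$ only the $n=0$ term survives, giving $F(t)=\alpha_0 t$ and therefore $\alpha_0=0$. Assuming $\alpha_0=\cdots=\alpha_{N-1}=0$, on $[N,N+1]$ the function reduces to $F(t)=\alpha_N(t-N)$, and since $t-N$ is not identically zero there, $F\equiv 0$ forces $\alpha_N=0$. This completes the induction and establishes uniqueness.

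To finish, I would assemble the isomorphism from the map $\sum_{n=0}^\infty\alpha_n X^n\mapsto\sum_{n=0}^\infty\alpha_n h^n$. It is well defined and surjective by Definition~\ref{p23a}, it is additive and multiplicative by the two displayed formulas of Lemma~\ref{p23b} (which reproduce exactly the sum and Cauchy product of $\mathbb{C}[[X]]$), and it is injective by the uniqueness just proved; hence it is a ring isomorphism $\mathbb{C}[[X]]\xrightarrow{\sim}\mathbb{C}[[h]]$. I do not anticipate a genuine difficulty in this argument: the only step needing care is the bookkeeping of which terms survive on each interval $[N,N+1]$ together with the base case, after which the induction runs immediately.
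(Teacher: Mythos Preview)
Your proposal is correct and follows essentially the same approach as the paper: both reduce via Theorem~\ref{p7a} to showing that the continuous function $\sum_{n=0}^\infty\alpha_n\max\{0,t-n\}$ vanishes identically only when all $\alpha_n=0$, and both exploit the fact that on $[m,m+1]$ only finitely many summands contribute. The only cosmetic difference is that the paper evaluates at the single point $t=m+1$ for the minimal index $m$ with $\alpha_m\neq 0$, whereas you run an induction over the intervals $[N,N+1]$; these are equivalent ways of packaging the same observation.
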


\begin{proof}
  Suppose $\sum_{n=0}^\infty\alpha_nh^n=0$.  
  By Theorem \ref{p7a}, it means 
  \[
    \left\{\sum_{n=0}^\infty\alpha_n\max\{0,t-n\}\right\}=\{0\}.
  \]
  If $\alpha_m\neq0$ for some $m$, then for the minimum $m$, 
  the value of the above equation at $t=m+1$ would be 
  \[
    \alpha_m\cdot 1+\alpha_{m+1}\cdot 0+\alpha_{m+2}\cdot 0+\dots=0, 
  \]
  which is a contradiction.  
  Hence we conclude $\alpha_0=\alpha_1=\dots=0$.  
\end{proof}

\begin{theorem}\label{p25a}
  Suppose $q=1/d$, $d\in\mathbb{Z}_{>1}$.  Let $\sigma_d=\tau_q$ and 
  $\sum_{n=0}^\infty\alpha_nh^n\in\mathbb{C}[[h]]$.
  Then 
  \begin{align*}
    \sigma_d\left(\sum_{n=0}^\infty\alpha_nh^n\right)
    =&\sum_{n=0}^\infty\alpha_nh^{dn},\\
    D'\left(\sum_{n=0}^\infty\alpha_nh^n\right)
    =&\sum_{n=1}^\infty n\alpha_nh^{n-1}, \quad D'=-h^{-1}(d/ds)
  \end{align*}
  Hence $\mathbb{C}[[h]]$ is a DT subring of 
  $(Q(\mathcal{C}),D',\sigma_d)$ and 
  the quotient field $\mathbb{C}((h))$ is a DT subfield.
\end{theorem}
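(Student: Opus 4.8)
The plan is to reduce everything to the explicit continuous-function representation supplied by Theorem \ref{p7a}, namely
\[
  \sum_{n=0}^\infty\alpha_nh^n
  =\frac{1}{l^2}\left\{\sum_{n=0}^\infty\alpha_n\max\{0,t-n\}\right\}
  =s^2\{g(t)\},\qquad
  g(t)=\sum_{n=0}^\infty\alpha_n\max\{0,t-n\},
\]
and then to apply $\sigma_d=\tau_q$ and $d/ds$ directly to $s^2\{g(t)\}$, exactly in the style of the proofs of Theorems \ref{p19a} and \ref{p16a}. A useful preliminary observation is that on any finite interval $[0,t_0]$ the defining sum for $g$ has only finitely many nonzero terms (those with $n<t_0$), so $g$ is locally a finite sum and every interchange of summation with integration or with the operators is automatic; this is what keeps the computation rigorous without appealing to a general continuity statement.

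For the first formula I would apply $\tau_q$ to $s^2\{g(t)\}$ using $\tau_qs=s/q$ from Theorem \ref{p15a} together with $\tau_q\{g(t)\}=\{qg(qt)\}$. The single algebraic identity that makes it work is $\max\{0,qt-n\}=q\max\{0,t-dn\}$, valid because $q=1/d$; combined with $d^2q^2=1$ the scalar factors collapse and one is left with $s^2\{\sum_n\alpha_n\max\{0,t-dn\}\}$, which by Theorem \ref{p7a} (the exponents $dn$ still increase monotonically to $+\infty$) is precisely $\sum_n\alpha_nh^{dn}$.

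For the second formula I would first compute $(d/ds)(s^2\{g(t)\})$ by the product rule, using $(d/ds)s=1$ and $(d/ds)\{g(t)\}=\{-tg(t)\}$ from Definition \ref{p11b}, and then rewrite the term $2s\{g(t)\}$ as $2s^2\{\int_0^tg(\tau)d\tau\}$ exactly as in the proof of Theorem \ref{p16a}. This reduces the whole derivative to $s^2\{2\int_0^tg-tg(t)\}$, and the heart of the argument is the pointwise identity
\[
  2\int_0^tg(\tau)d\tau-tg(t)=-\sum_{n=1}^\infty n\alpha_n\max\{0,t-n\},
\]
which I would verify termwise using $\int_0^t\max\{0,\tau-n\}d\tau=\tfrac12\max\{0,t-n\}^2$ and the cancellation $(t-n)^2-t(t-n)=-n(t-n)$ on the region $t>n$. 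Thus $(d/ds)(\sum\alpha_nh^n)=-\sum_{n\ge1}n\alpha_nh^n$, and multiplying by $-h^{-1}$ gives $D'(\sum\alpha_nh^n)=\sum_{n\ge1}n\alpha_nh^{n-1}$. I expect this pointwise identity to be the only real obstacle; everything around it is bookkeeping, and the fact that the sums are locally finite removes any convergence worry.

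Finally, the two displayed formulas show that $\sigma_d$ and $D'$ carry $\mathbb{C}[[h]]$ into itself, since the images $\sum\alpha_nh^{dn}$ and $\sum_{n\ge1}n\alpha_nh^{n-1}$ are again of the required form; as $\mathbb{C}[[h]]$ is a ring by Lemma \ref{p23b}, it is therefore a DT subring, with the commutation relation $D'\sigma_d=dh^{d-1}\sigma_dD'$ inherited verbatim from Theorem \ref{p16a}. Passing to the quotient field, which by Lemma \ref{p24a} is the field of formal Laurent series $\mathbb{C}((h))$, the homomorphism $\sigma_d$ and the derivation $D'$ extend by $\sigma_d(a/b)=\sigma_d(a)/\sigma_d(b)$ and the usual quotient rule and again preserve the field, so $\mathbb{C}((h))$ is a DT subfield.
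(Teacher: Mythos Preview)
Your proposal is correct and follows essentially the same route as the paper: both compute $\sigma_d$ and $d/ds$ directly on the representation $s^2\{\sum_n\alpha_n\max\{0,t-n\}\}$ from Theorem~\ref{p7a}, use the same product-rule decomposition $2s\{g\}+s^2\{-tg\}=s^2\{2\int_0^tg-tg\}$, and reduce the derivative to the termwise identity $2\int_0^t\max\{0,\tau-n\}d\tau-t\max\{0,t-n\}=-n\max\{0,t-n\}$. Your phrasing of the integral as $\tfrac12\max\{0,t-n\}^2$ and your explicit remark on local finiteness are cosmetic differences only.
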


\begin{proof} The first equation is obtained as follows, 
  \begin{equation*}
    \begin{aligned}
    \sigma_d\left(\sum_{n=0}^\infty\alpha_nh^n\right)
    &=\tau_q\left(\frac{1}{l^2}\left\{\sum_{n=0}^\infty\alpha_n\max\{0,t-n\}\right\}\right)\\
    &=\frac{1}{q^2l^2}\left\{q\sum_{n=0}^\infty\alpha_n\max\{0,qt-n\}\right\}\\
    &=\frac{1}{l^2}\left\{\sum_{n=0}^\infty\alpha_n\max\{0,t-dn\}\right\}\\
    &=\sum_{n=0}^\infty\alpha_nh^{dn}.
    \end{aligned}
  \end{equation*}
  
  To prove the second equation, we start with the derivative by $d/ds$, 
  \begin{equation*}
    \begin{aligned}
    \frac{d}{ds}\left(\sum_{n=0}^\infty\alpha_nh^n\right)
    &=\frac{d}{ds}\left(s^2\left\{\sum_{n=0}^\infty\alpha_n\max\{0,t-n\}\right\}\right)\\
    &=2s\left\{\sum_{n=0}^\infty\alpha_n\max\{0,t-n\}\right\}
    +s^2\left\{-t\sum_{n=0}^\infty\alpha_n\max\{0,t-n\}\right\}\\
    &=2s^2\left\{\sum_{n=0}^\infty\alpha_n\int_0^t\max\{0,\tau-n\}d\tau\right\}\\
    &\qquad -s^2\left\{\sum_{n=0}^\infty\alpha_nt\max\{0,t-n\}\right\}\\
    &=s^2\left\{\sum_{n=0}^\infty\alpha_n\left(2\int_0^t\max\{0,\tau-n\}d\tau-t\max\{0,t-n\}\right)\right\}.
    \end{aligned}
  \end{equation*}
  The integral is calculated as follows, 
  \begin{equation*}
    \begin{aligned}
    \int_0^t\max\{0,\tau-n\}d\tau
    &=\begin{cases}
      0&(0\leq t\leq n),\\
      \displaystyle \int_n^t\tau-n\ d\tau &(n<t<\infty)
    \end{cases}\\
    &=\begin{cases}
      0&(0\leq t\leq n),\\
      \displaystyle \frac{t^2}{2}-nt+\frac{n^2}{2} &(n<t<\infty),
    \end{cases}\\
    \end{aligned}
  \end{equation*}
  \begin{equation*}
    \begin{aligned}
    2\int_0^t\max\{0,\tau-n\}d\tau-t\max\{0,t-n\}
    &=\begin{cases}
      0 & (0\leq t\leq n),\\
      -nt+n^2 & (n<t<\infty)
    \end{cases}\\
    &=-n\begin{cases}
      0 & (0\leq t\leq n),\\
      t-n & (n<t<\infty)
    \end{cases}\\
    &=-n\max\{0,t-n\}.
    \end{aligned}
  \end{equation*}
  Hence we find 
  \begin{equation*}
    \begin{aligned}
    \frac{d}{ds}\left(\sum_{n=0}^\infty\alpha_nh^n\right)
    &=s^2\left\{\sum_{n=0}^\infty -n\alpha_n\max\{0,t-n\}\right\}\\
    &=\sum_{n=1}^\infty -n\alpha_nh^n,
    \end{aligned}
  \end{equation*}
  and thus by the definition of $D'$,
  \[
    D'\left(\sum_{n=0}^\infty\alpha_nh^n\right)
    =-\frac{1}{h}\sum_{n=1}^\infty-n\alpha_nh^n
    =\sum_{n=1}^\infty n\alpha_nh^{n-1}. 
  \]
\end{proof}

\begin{acknowledgment}
  This work was partially supported by Japan Society for the Promotion of Science (JSPS) KAKENHI Grant Number 23K03144.
\end{acknowledgment}

\end{document}